\theoremstyle{plain}
\newtheorem{outtheorem}{Theorem}
\newtheorem{theorem}{Theorem}[section]
\newtheorem{proposition}[theorem]{Proposition}
\newtheorem{lemma}[theorem]{Lemma}
\newtheorem{corollary}[theorem]{Corollary}
\theoremstyle{definition}
\newtheorem{definition}[theorem]{Definition}
\newenvironment{example*}{\exampleEnv}{\endexampleEnv}
\newenvironment{remark*}{\remarkEnv}{\endremarkEnv}
\def\gg{\mathfrak{g}}
\def\hh{\mathfrak{h}}
\def\mm{\mathfrak{m}}
\def\kk{\mathfrak{k}}
\def\ll{\mathfrak{l}}
\def\pp{\mathfrak{p}}
\def\zz{\mathfrak{z}}
\newcommand\reallywidetilde[1]{\ThisStyle{%
		\setbox0=\hbox{$\SavedStyle#1$}%
		\stackengine{-.1\LMpt}{$\SavedStyle#1$}{%
			\stretchto{\scaleto{\SavedStyle\mkern.2mu\AC}{.5150\wd0}}{.6\ht0}%
		}{O}{c}{F}{T}{S}%
}}
\numberwithin{equation}{section}
\DeclareMathOperator{\ric}{ric}
\DeclareMathOperator{\Ad}{Ad}
\DeclareMathOperator{\tr}{Tr}
\DeclareMathOperator{\ad}{ad}
\DeclareMathOperator{\Rm}{Rm}
\title{\bf Long-time behavior of awesome homogeneous Ricci flows}
\author{Roberto Araujo}
\address{Universit\"at M\"unster, Mathematisches Institut\\
	Einsteinstr. 62\\
	48149 M\" unster\\
	Germany}
\email{r.araujo@uni-muenster.de}
\keywords{awesome, homogeneous, immortal, non-compact, semisimple, Ricci flow}
\subjclass[2020]{53C30, 53E20}
\thanks{Funded by the Deutsche Forschungsgemeinschaft (DFG, German Research Foundation) under Germany’s Excellence Strategy EXC 2044–390685587, Mathematics Münster: Dynamics–Geometry–Structure and the CRC 1442 Geometry: Deformations and Rigidity}
\begin{document}

\begin{abstract}
We show that the set of \textit{awesome} homogeneous metrics on non-compact manifolds is Ricci flow invariant. Moreover, if the universal cover of such awesome homogeneous space is not contractible the Ricci flow has finite extinction time, confirming the Dynamical Alekseevskii Conjecture in this case. We also analyze the long-time limits of awesome homogeneous Ricci flows.
\end{abstract}

\maketitle


\section{Introduction} \label{section:1}

The Ricci flow is the geometric evolution equation on given by 

\begin{equation*}\label{eq:general RF}
	\frac{\partial g(t)}{\partial t} = -2\ric(g(t)) , \hspace{0.5cm} g(0)=g_0,
\end{equation*}
where $\text{ric}(g)$ is the Ricci $(2,0)$-tensor of the Riemannian manifold $(M,g)$.

Hamilton introduced the Ricci flow in \cite{ham} and proved short time existence and uniqueness when $M$ is compact. Then Chen and Zhu \cite{chenzu} proved the uniqueness of the flow within the class of complete and bounded curvature Riemannian manifolds. 

A maximal Ricci flow solution $g(t)$, $t \in[0,T)$ is called \textit{immortal} if $T=+\infty$, otherwise we say that the flow has \textit{finite extinction time}.

A Riemannian manifold $(M,g)$ is called \textit{homogeneous} if its isometry group acts transitively on it. From the uniqueness of a Ricci flow solution it follows immediately that the isometries are preserved along the flow, thus a solution $g(t)$ from a homogeneous initial metric $g_0$ would remain homogeneous for the same isometric action. Hence, the Ricci flow equation given above becomes an autonomous nonlinear ordinary differential equation. 

 In the homogeneous case, the scalar curvature is increasing along the flow (see \cite{laf}). Furthermore, if the scalar curvature is positive at some point along the flow, then it must blow up in finite time and hence, the solution is not immortal. Lafuente \cite{laf} has shown that actually a homogeneous Ricci flow solution has finite extinction time if and only if the scalar curvature blows up in finite time, or equivalently, if and only if the scalar curvature ever becomes positive along the flow. Moreover, Bérard-Bergery \cite{ber} has shown that a manifold admits a homogeneous Riemannian metric of positive scalar curvature if and only if its universal cover is not diffeomorphic to Euclidean space. 

 B\"ohm and Lafuente then proposed on \cite{bl18} the problem of showing whether the converse is also true, namely they asked whether the universal cover of an immortal homogeneous Ricci flow solution is always diffeomorphic to $\mathbb{R}^n$. This got established later as the \textit{Dynamical Alekseevskii Conjecture} \cite{mfo22}.

Not much is known in the direction of the Dynamical Alekseevskii Conjecture other than in low-dimensions. In the work of Isenberg-Jackson \cite{ij}, the 3-dimensional homogeneous Ricci flow has been thoroughly studied, moreover in \cite{ijl} the authors study a large set of metrics on dimension 4. Indeed, up to dimension 4 the conjecture is true (see \cite{ara}). Moreover, in \cite{ara}, it was shown that the conjecture is true if the isometry group of the homogeneous Riemannian manifold is, up to a covering, a Lie group product with a compact semisimple factor.

In this work we study the long-time behavior of the homogeneous Ricci flow solutions on semisimple homogeneous spaces on a special family of Ricci flow invariant metrics, called \textit{awesome metrics}.

Let $G$ be a semsimple Lie group and $G/H$ a homogeneous Riemannian manifold. Let $\gg$ be the Lie algebra of $G$ and $\hh$ the Lie algebra of $H$. Let $\gg =\kk \oplus \pp$ be a Cartan decomposition of $\gg$, $K$ the integral subgroup corresponding to $\kk$, and $\mm=\ll \oplus\pp$ a reductive complement to  $\hh$. Then we call a $G$-invariant metric $g$ on $G/H$ \textit{awesome} if $g(\ll,\pp)=0$.

The set of awesome homogeneous metrics was introduced by Nikonorov \cite{n}, where he proved that it contains no Einstein metric. Semisimple homogeneous spaces with inequivalent irreducible summands in its isotropy representation supply the simplest examples of homogeneous spaces $G/H$ such that every $G$-invariant metric is awesome.

The first main result of this article is a generalization of the work of Nikonorov \cite[Theorem 1]{n} to the dynamical setting, giving a partial positive answer to the conjecture. 

\begin{outtheorem}\label{thm:A}
	Let $(G/H,g_0)$ be a homogeneous Riemannian manifold such that the universal cover is not diffeomorphic to $\mathbb{R}^n$ and $G$ is semisimple. If $g_0$ is an awesome $G$-invariant metric, then the Ricci flow solution starting at $g_0$ has finite extinction time.
\end{outtheorem}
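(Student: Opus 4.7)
The plan is to argue by contradiction. Assume the maximal Ricci flow solution $g(t)$ is immortal. By Lafuente's characterization recalled in the introduction, this means $R(g(t)) \le 0$ for every $t \ge 0$. In the homogeneous setting one has $\dot R = 2|\ric|^2 \ge 2R^2/n \ge 0$, so $R(t)$ is monotone non-decreasing and bounded above by $0$; it therefore converges to some $R_\infty \le 0$, and $\int_0^\infty |\ric(g(t))|^2\,dt < \infty$. The pointwise lower bound $|\ric|^2 \ge R^2/n$ then forces $R_\infty = 0$, so the flow is asymptotically Einstein in an integral sense. The goal is to upgrade this asymptotic information to an actual awesome Einstein limit metric, contradicting Nikonorov's theorem that no such metric exists.

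The first preparatory step is the invariance statement announced in the abstract: the awesome condition $g(\ll,\pp)=0$ is preserved by the Ricci flow. By uniqueness of the homogeneous flow it suffices to check that $\ric(g)(\ll,\pp) = 0$ whenever $g$ is awesome. This is a direct Lie-theoretic calculation. The Killing-form contribution vanishes because $\ll\subset\kk$ and $\pp$ are $B$-orthogonal. Every other term in the standard homogeneous Ricci formula pairs brackets $[X,e_i]_\mm$ and $[Y,e_i]_\mm$ with $X\in\ll$, $Y\in\pp$ and $e_i\in\mm$; the Cartan relations $[\kk,\pp]\subset\pp$, $[\pp,\pp]\subset\kk$, $[\kk,\kk]\subset\kk$ send these two elements into opposite summands of $\ll\oplus\pp$, and these are $g$-orthogonal precisely by awesomeness.

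Next I would recast the flow in Lauret's bracket-flow picture on the fixed vector space $\mm$, where the awesome condition becomes a closed algebraic condition on the Lie bracket. Under semisimplicity of $G$, the hypothesis that the universal cover of $G/H$ is not $\mathbb{R}^n$ translates into $\ll\neq 0$, since $\ll=0$ would mean $\kk\subset\hh$, so that $H$ contains a maximal compact subgroup of $G$ and $G/H$ retracts onto the contractible symmetric space $G/K$. On the compact block $\ll$ the scalar curvature of an awesome metric acquires a distinguished positive contribution coming from $-\tfrac{1}{2} B|_{\ll}$ contracted against $g|_\ll^{-1}$, which serves as a coercivity term for the flow on $g|_\ll$. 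Combining this coercivity with $R(t)\to 0$ and $\int |\ric|^2 \,dt < \infty$, I would extract a subsequential limit $g_\infty$ of a suitable rescaling; the limit is awesome by closedness of the condition, is supported on a semisimple Lie algebra, and is an algebraic Ricci soliton, hence Einstein on the semisimple background. Nikonorov's theorem then rules it out, yielding the desired contradiction.

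The main obstacle, in my view, is the limit step. One must simultaneously rule out that the bracket degenerates to a non-semisimple contraction (to which Nikonorov's theorem would not directly apply) and that the $\ll$-block of the metric collapses (which would erase the coercive positive contribution to $R$ and thus allow $R_\infty=0$ for trivial reasons). The control on both degenerations should come from exploiting the hypothesis $\ll\neq 0$ together with the monotonicity $\dot R = 2|\ric|^2$ and the integrability $\int|\ric|^2 < \infty$; making this rigorous is where I expect the bulk of the technical work to lie.
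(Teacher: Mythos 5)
Your outline of the awesome-invariance step (Lemma \ref{lemma:awesome is RF invariant}) is correct and matches the paper's computation. The rest of the proposal, however, has two genuine gaps.

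First, the translation of the topological hypothesis is wrong. You claim that ``the universal cover of $G/H$ is not $\mathbb{R}^n$'' translates into $\ll \neq 0$. It does not: the correct translation, used in the paper, is that $K/H$ is not a torus, equivalently $[\kk,\kk]\not\subset\hh$, equivalently that for some $\ll$-module $\ll_{n'}$ either $[\hh,\ll_{n'}]\neq 0$ or $[\ll_{n'},\ll]\not\subset\hh$. The distinction matters: for $\reallywidetilde{SL(2,\mathbb{R})}$ one has $\dim\ll = 1 > 0$, yet the manifold is $\mathbb{R}^3$, the awesome Ricci flow is immortal, and (Theorem \ref{thm:C}) the blow-down is $\mathbb{H}^2\times\mathbb{E}^1$. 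Your coercivity argument based on the positive term $\sum_{i\in I_\ll}\frac{d_i}{2l_i}$ in the scalar curvature formula would apply equally in that contractible, immortal situation, because there too $\ll \neq 0$. So an argument pivoting only on $\ll\neq 0$ cannot possibly be correct.

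Second, the limit-extraction step, which you yourself flag as the bulk of the work, is precisely where a different mechanism is needed, and producing an awesome Einstein limit to feed into Nikonorov's theorem is not the right mechanism. An immortal homogeneous Ricci flow does not converge (after rescaling) to an Einstein metric but, generically, to an expanding Ricci soliton; on awesome metrics Theorem \ref{thm:C} identifies this limit explicitly as $\Sigma_\infty \times \mathbb{E}^{\dim\ll}$, which is precisely one of the degenerations you list as needing to be excluded (collapse of the $\ll$-block, together with flattening to a product). The integrability $\int_0^\infty |\ric|^2\,dt < \infty$ and $R\to 0$ alone cannot exclude it, because that scenario actually occurs for immortal awesome flows. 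What the paper does instead is establish the scale-invariant pinching estimate $p_m(t)/p_1(t)\to 1$ (Proposition \ref{prop:pinching} and Corollary \ref{corollary:p/p goes to 1}) and then, using the non-torus hypothesis in the quantitative form that either $\sum_{j,k}[n'jk] \leq d_{n'}-\lambda$ or $\sum_{j,k\in I_\ll}[n'jk] \geq 2\lambda$ for some $\lambda>0$, derives directly from formula \eqref{eq:nikonorov ricci} that $\frac{d^+ l_{n'}}{dt} \leq -\frac{\lambda}{d}+\epsilon < 0$ for large $t$, so $l_{n'}(t)$ hits zero in finite time. No limit soliton is constructed and Nikonorov's non-existence theorem is not invoked in the proof of Theorem \ref{thm:A}; it is the algebraic lower bound on a preferred compact direction, combined with the pinching of the non-compact block, that produces the contradiction.
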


In \cite{dl}, Dotti and Leite have shown that $SL(n,\mathbb{R})$ for $n\geq3$ admit left-invariant Ricci negative metrics. Later, Dotti, Leite and Miatello \cite{dlm} were able to extend this result by showing that all but a finite collection of non-compact simple Lie groups admit a Ricci negative left-invariant metric. All those metrics are awesome. This shows that for theses spaces such that the universal cover $\tilde{M}$ is not contractible the confirmation of the Dynamical Alekseevskii Conjecture implies a change of regime of the Ricci flow: from one in which the manifold expands in all directions to one such that for some direction it shrinks in finite time. B\"ohm's proof \cite[Theorem 3.2]{boe} of the finite extinction time of the Ricci flow on non-toral compact homogeneous manifolds works by showing an explicit preferred direction in which the curvature is Ricci positive (the same approach is followed in \cite{ara}), but Dotti, Leite and Miatello's results indicate that we cannot directly do the same here.

Indeed, in order to prove Theorem \ref{thm:A} we need to first prove some scale-invariant pinching estimates (Proposition \ref{prop:pinching}) that will eventually lead to the existence of a Ricci positive direction given by the non-toral compact fibers as in \cite{boe}. Moreover, the estimates obtained can be exploited further to prove the following two convergence results.

\begin{outtheorem}\label{thm:B}
	Let $M=G/H$ be a homogeneous manifold, such that the universal cover is not diffeomorphic to $\mathbb{R}^n$ and $G$ is semisimple. Let $(M,g(t))$, $t \in [0,T)$, be an awesome Ricci Flow adapted to the Cartan decomposition $\gg = \kk \oplus \pp$. Let $R(g)$ be the scalar curvature of the metric $g$. For any sequence $(t_a)_{a\in\mathbb{N}}$, $t_a \to T$ there exists a subsequence such that $\left(M,R(g(t_{\hat{a}}))\cdot g(t_{\hat{a}})\right)$ converges in pointed $C^{\infty}$-topology to the Riemannian product
	
	\[E_{\infty} \times \mathbb{E}^d,\]
	
	where $E_{\infty}$ is a compact homogeneous Einstein manifold with positive scalar curvature and $\mathbb{E}^d$ is the $d$-dimensional (flat) Euclidean space with $d \geq \dim \pp$.
	
		Furthermore, the geometry of $E_{\infty}$ just depends on the subsequence of Riemannian submanifolds $\left(K/H,R(g(t_{\hat{a}}))\cdot g(t_{\hat{a}})\right)$. 
\end{outtheorem}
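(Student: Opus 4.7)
The plan is to use Theorem \ref{thm:A}: finite extinction $T < \infty$ implies $R(g(t)) \to +\infty$ as $t \nearrow T$. I analyze the rescaled family $h(t) := R(g(t))\cdot g(t)$, which has $R(h(t)) = 1$ and solves a time-reparametrization of the normalized Ricci flow. The first step is to apply Proposition \ref{prop:pinching} to obtain a uniform bound on the full Riemann tensor of $h(t)$. Since $(M, h(t))$ is a family of homogeneous Riemannian manifolds with uniformly bounded curvature, the pointed Cheeger--Gromov compactness theorem for homogeneous Riemannian manifolds (no injectivity-radius bound is needed for homogeneous targets) yields, for every sequence $t_a \to T$, a subsequence $t_{\hat a}$ along which $(M, h(t_{\hat a}), p)$ converges in the pointed $C^\infty$-topology to a homogeneous Riemannian manifold $(M_\infty, g_\infty, p_\infty)$.

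Next, I identify the structure of the limit using the awesome decomposition $\mm = \ll \oplus \pp$. The scale-invariant pinching should decouple $\ll$ and $\pp$: the $\ll$-eigenvalues of $h(t)$ stay bounded whereas the $\pp$-eigenvalues diverge. Geometrically, the compact fibers $K/H$ retain unit-scale geometry in $h(t)$ while the $\pp$-directions stretch out indefinitely. Combined with the vanishing of sectional curvatures of planes containing a $\pp$-direction, the image of $\pp$ in the limit forms a parallel, totally geodesic, flat distribution. The de Rham decomposition then yields $M_\infty = E_\infty \times \mathbb{E}^d$ with $d \geq \dim\pp$, where any excess flat directions beyond $\dim\pp$ come from flat factors within the limit of $(K/H, h(t_{\hat a})|_{K/H})$.

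To see that $E_\infty$ is compact, Einstein, and has positive scalar curvature: the uniform bound on the $\ll$-eigenvalues of $h(t)$ gives a uniform diameter bound on the compact submanifolds $(K/H, h(t_{\hat a})|_{K/H})$, so $E_\infty$---being the non-flat factor of their Cheeger--Gromov limit---is compact. Positivity of the scalar curvature follows from $R(g_\infty) = 1$ together with flatness of the $\mathbb{E}^d$-factor. The Einstein property is obtained by recognizing the finite extinction as a Type-I singularity, a consequence of the pinching combined with the identity $\partial_t R = 2|\Ric|^2 \geq \tfrac{2}{n} R^2$ for homogeneous Ricci flow, which forces $R(g(t))(T-t)$ to be uniformly bounded above and below; the rescaled limit is then a gradient shrinking Ricci soliton, and homogeneous shrinking Ricci solitons are known to be Einstein. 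The final claim---that the geometry of $E_\infty$ depends only on the subsequence of $(K/H, h(t_{\hat a})|_{K/H})$---is immediate since $E_\infty$ is precisely the non-flat factor of their limit.

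The main obstacle I foresee is the exact identification of the Euclidean factor's dimension: one must show that the $\pp$-eigenvalues of $h(t)$ diverge strongly enough, and that sectional curvatures involving $\pp$-directions vanish in the limit, with quantitative control sufficient to recover the full $\mathbb{E}^d$ with $d \geq \dim\pp$. This requires Proposition \ref{prop:pinching} to decouple the $\ll$- and $\pp$-eigenvalues of $g(t)$ in a sufficiently precise way, which should be the key geometric manifestation of the awesomeness condition and the step where semisimplicity of $G$ enters in an essential manner.
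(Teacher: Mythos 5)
Your outline shares the main ingredients with the paper's proof (Theorem~\ref{thm:A} $\Rightarrow$ finite extinction, Type~I classification, Enders--M\"uller--Topping to get a gradient shrinking soliton limit, and the soliton rigidity structure theorem), but there are three genuine gaps, and the third one is the heart of the theorem.

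First, your route to the product structure $E_\infty\times\mathbb{E}^d$ is a de Rham decomposition argument resting on ``the vanishing of sectional curvatures of planes containing a $\pp$-direction.'' You neither prove this nor explain where it would come from, and the paper does not prove it either because it does not need it. The paper's path is entirely via Petersen--Wylie rigidity of homogeneous gradient shrinking solitons, which immediately gives $M_\infty=\mathbb{E}^d\times E_\infty$ with $E_\infty$ compact Einstein of positive scalar curvature. The lower bound $d\geq\dim\pp$ then follows from a purely Ricci-curvature argument: since a shrinking soliton has $\ric\geq 0$ while $r^\pp_1<0$ by \eqref{eq:r1}, the rescaled $r^\pp_1$ must tend to $0$, and a recursive use of this forces the whole cross term \eqref{eq:ricp IC to 0} to vanish, giving $\dim\pp$ independent Ricci-flat directions. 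This is strictly weaker than sectional-curvature flatness, but it suffices. Your claim needs either a proof or replacement by the Ricci-only argument.

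Second, you assert that ``the $\ll$-eigenvalues of $h(t)$ stay bounded'' and use this for compactness of $E_\infty$ via a diameter bound. From $R(g)\leq\frac{\dim\ll}{2l_1}$ one controls only $R(g(t))\,l_1(t)$; the largest rescaled $\ll$-eigenvalue $R(g(t))\,l_n(t)$ is not controlled by Proposition~\ref{prop:pinching} and may a priori diverge. Compactness of $E_\infty$ should instead come for free from Petersen--Wylie: a homogeneous Einstein manifold with positive scalar curvature has positive Ricci and is compact by Bonnet--Myers.

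Third, and most seriously, you dismiss the ``Furthermore'' statement as ``immediate since $E_\infty$ is precisely the non-flat factor of their limit.'' This is the difficult part of the theorem, and it is not immediate. The Ricci tensor of the ambient rescaled metric $\tilde g_s$ restricted to $\ll$ is \emph{not} the Ricci tensor of the induced metric on $(K/H,\tilde g_s|_{K/H})$: by \eqref{eq:ricci tensor of in terms of the induced metric} they differ by terms involving $\ad(X)|_\pp$ and the rescaled brackets $[\pp,\pp]_\mm$. Most of the paper's proof is devoted to showing that these correction terms vanish as $s\to T$, using the vanishing of the cross terms \eqref{eq:ricp IC to 0}, a basis-normalization argument (the $\tilde E^{\pp_i}_\alpha$ computation), and a Cauchy--Schwarz estimate exploiting that $l_n(s)$ grows at most linearly while $p_1(s)$ grows at least linearly. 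Without this, one cannot conclude that the limiting Einstein factor is recoverable from the induced submanifold geometry alone, and your proof does not establish the final claim of the theorem.
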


\begin{outtheorem}\label{thm:C}
	Let $\tilde{M}=G/H$ be a homogeneous manifold diffeomorphic to $\mathbb{R}^n$ with $G$ semisimple. Let $(\tilde{M},g(t))$, $t \in [1,\infty)$, be an awesome Ricci Flow adapted to the Cartan decomposition $\gg = \kk \oplus \pp$. Then the parabolic rescaling $(\tilde{M},t^{-1}g(t))$ converges in pointed $C^{\infty}$-topology to the Riemannian product
	\[ \Sigma_{\infty}\times \mathbb{E}^{\dim \ll},\]
	where $\Sigma_{\infty}=\left(G/K,\left.B\right|_{\pp\times\pp}\right)$ is the non-compact Einstein symmetric space defined by the pair $(\gg,\kk)$ and $\mathbb{E}^d$ is the $d$-dimensional (flat) Euclidean space. 
\end{outtheorem}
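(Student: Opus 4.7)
Since $\tilde{M}\cong\mathbb{R}^n$, B\'erard-Bergery's theorem excludes positive scalar curvature for any $G$-invariant metric on $\tilde{M}$, and Lafuente's criterion then forces the awesome Ricci flow $g(t)$ to be immortal on $[1,\infty)$. Awesomeness is preserved along the flow by the Ricci flow invariance of the awesome set (Theorem~\ref{thm:A} and the discussion preceding it). The plan is to extract pointed Cheeger-Gromov limits of the parabolic rescaling $g_t:=t^{-1}g(t)$, use Proposition~\ref{prop:pinching} to identify these limits algebraically, and conclude by uniqueness of the limit.

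By the type III curvature bound for immortal homogeneous Ricci flows one has $|\Rm(g(t))|_{g(t)}\leq C/t$, so $g_t$ has uniformly bounded curvature on $[1,\infty)$. Hamilton's pointed $C^\infty$-compactness theorem, applicable since the injectivity radius of $\tilde{M}$ is uniformly bounded below by homogeneity and curvature control, then yields that any sequence $t_a\to\infty$ admits a subsequence $t_{\hat{a}}$ along which $(\tilde{M},g_{t_{\hat{a}}},p)\to(M_\infty,g_\infty,p_\infty)$ in pointed $C^\infty$-topology. By general theory the limit is a homogeneous expanding Ricci soliton, and by closedness of the awesome set under smooth convergence $g_\infty$ is awesome with respect to the limiting algebraic structure.

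To identify $g_\infty$ I would apply Proposition~\ref{prop:pinching} blockwise. On the $\pp$-block, the scale-invariant pinching together with the expanding soliton equation forces any cluster value to be a $G$-invariant Einstein metric on $G/K$ with Einstein constant $-\tfrac{1}{2}$; since the Killing metric $B|_{\pp\times\pp}$ is, up to scale, the unique $G$-invariant Einstein metric on the irreducible symmetric space $G/K$, and the Einstein constant pins the scale down, the $\pp$-block of $g_\infty$ equals $B|_{\pp\times\pp}$ and so produces $\Sigma_\infty$. On the $\ll$-block, the pinching forces the internal bracket contributions to decay under rescaling, so the $\ll$-block of $g_\infty$ defines a flat metric; since in $\tilde{M}\cong\mathbb{R}^n$ the $K/H$-fibers are themselves diffeomorphic to $\mathbb{R}^{\dim\ll}$, the $\ll$-direction in $g_\infty$ is isometric to $\mathbb{E}^{\dim\ll}$. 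Preservation of awesomeness in the limit decouples the two blocks, yielding the Riemannian product $g_\infty=\Sigma_\infty\times\mathbb{E}^{\dim\ll}$. Since this limit is independent of the chosen subsequence, a standard subsequence argument upgrades subsequential convergence to full pointed $C^\infty$-convergence of $g_t$.

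The principal obstacle is step three: pinning the $\pp$-block down to \emph{precisely} the Killing metric rather than an arbitrary scalar multiple. The uniqueness is ultimately algebraic ($G/K$ is irreducible symmetric), but one must ensure that the coupling between the $\ll$- and $\pp$-blocks in the Ricci tensor of an awesome metric does not perturb the Einstein scale in the limit; the scale-invariant estimates of Proposition~\ref{prop:pinching} are designed precisely to control this coupling, and it is their quantitative form that I expect to be decisive here.
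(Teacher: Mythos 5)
Your plan diverges from the paper's proof at a structural level: you propose the familiar route of extracting Cheeger–Gromov subsequential limits via Hamilton compactness and then identifying the limit through soliton rigidity, whereas the paper proves convergence \emph{directly} by constructing an explicit one-parameter family of diffeomorphisms $\phi_s\colon \pp\times\ll\to G/H$, $\phi_s(x,u)=\exp(x)\cdot\exp\bigl(\sqrt{s(P^{\ll}(s))^{-1}}\,u\bigr)H$, and computing the pullbacks $\phi_s^*\tilde g_s$ block by block. This is not merely stylistic. Since $s^{-1}g(s)|_{\ll\times\ll}\to 0$ (Corollary~\ref{corollary:g_s goes to 0}), the $\ll$-directions collapse under the parabolic rescaling, and the entire point of $\phi_s$ is that the factor $\sqrt{s(P^{\ll}(s))^{-1}}$ reparametrizes the fiber $K/H\cong\mathbb{R}^{\dim\ll}$ so that $\phi_s^*\tilde g_s|_{\ll\times\ll}\equiv Q|_{\ll\times\ll}$ exactly and the Euclidean factor appears in the limit. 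Your proposal says the $\ll$-block ``is flat and so is isometric to $\mathbb{E}^{\dim\ll}$,'' but without specifying any gauge you cannot decide between a degenerate limit, a point, or $\mathbb{E}^{\dim\ll}$; this is the technical crux and it is missing.

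Two further objections to your outline. First, you invoke ``general theory'' to claim that the Cheeger--Gromov limit of a Type III blow-down is a homogeneous expanding Ricci soliton. There is no such general theorem in the immortal homogeneous setting analogous to Enders--M\"uller--Topping for Type~I; the paper's concluding remark observes a posteriori that the explicitly computed limit is an expanding soliton, it does not assume it. Second, your identification of the $\pp$-block rests on ``$B|_{\pp\times\pp}$ is the unique $G$-invariant Einstein metric on the \emph{irreducible} symmetric space $G/K$,'' but $G$ is only assumed semisimple, so $G/K$ can be a nontrivial product of irreducible symmetric spaces (e.g.\ $\bigl(\mathbb{H}^2\bigr)^k$ for $G=\reallywidetilde{(SL(2,\mathbb{R}))^k}$), and uniqueness of the normalized Einstein metric fails. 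The correct mechanism is elementary and does not need irreducibility: $p_1(t)\geq t+p_1(0)$ together with $p_m(t)\leq p_1(t)+c_0\sqrt{p_1(t)}$ from Proposition~\ref{prop:pinching} forces $t^{-1}p_i(t)\to 1$ for every $i$, hence $\tilde g_t|_{\pp\times\pp}\to B|_{\pp\times\pp}$ directly. You are right that Proposition~\ref{prop:pinching} is the decisive ingredient, but the way it enters is through these scalar asymptotics, not through an Einstein-uniqueness argument, and the conversion from asymptotics of eigenvalues to pointed $C^\infty$-convergence of the metrics requires the diffeomorphisms you have not built.
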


Theorem \ref{thm:B} shows that in order to understand the blow-up limits of the Ricci flow on the awesome metrics we can reduce the investigation to the corresponding blow-up of the compact homogeneous fibers given by the Cartan decomposition. Such analysis was done for example in \cite{boe}. The other convergence result is the following.

Since every left-invariant metric on $\reallywidetilde{SL(2,\mathbb{R})}$ is awesome, Theorem \ref{thm:C} is a generalization of the result by Lott \cite{lot} which states that the parabolic blow-down of any left-invariant metric in $\reallywidetilde{SL(2,\mathbb{R})}$ converges to the Riemannian product $\mathbb{H}^2 \times \mathbb{R}$. 

The structure of this article is the following: In Section \ref{section:2}, we give a quick overview of the homogeneous Ricci flow and show that the space of awesome metrics is Ricci flow invariant. In Section \ref{section:3}, we mainly establish a priori algebraic bounds that exploit the compatibility of the Cartan decomposition and the metric in the awesome case. In Section \ref{section:4}, we use these algebraic bounds to get control quantities to our dynamics which allows us to prove theorem \ref{thm:A}. Finally, in Section \ref{section:5} we conclude with the analysis of the long-time limits. In particular, under the hypothesis of Theorem \ref{thm:A}, we show in Theorem \ref{thm:B} a rigidity result for the possible limit geometries as the solution approaches the singularity. We finish by showing in Theorem \ref{thm:C} which is the limit geometry at infinity for the case when $g(t)$ is an immortal awesome Ricci flow. This generalizes the work on the Ricci flow of left-invariant metrics on $\reallywidetilde{SL(2,\mathbb{R})}$ done in \cite{lot} to $\mathbb{R}^d$-bundles over Hermitian symmetric spaces.  

\textit{Acknowledgements.} I would like to thank my PhD advisor Christoph B\"ohm for his support and helpful comments, as well as to Ramiro Lafuente for the insightful conversations and fruitful comments and suggestions, in particular regarding the last section of this paper. 
I would also like to
thank the referee for the useful remarks.

\section{Homogeneous Ricci Flow of Awesome Metrics}\label{section:2}

A Riemannian manifold $(M^n, g)$ is said to be homogeneous if its isometry group
$I(M, g)$ acts transitively on $M$. If $M$ is connected (which we will assume from here onward unless otherwise stated), then each transitive closed Lie subgroup $G <I(M, g)$ gives
rise to a presentation of $(M, g)$ as a homogeneous space with a $G$-invariant metric
$(G/H, g)$, where $H$ is the isotropy subgroup of $G$ fixing some point $p \in M$. 

The $G$-action induces a Lie algebra homomorphism $\gg \to \mathfrak{X}(M)$ assigning to each $X \in \gg$ a Killing field on $(M, g)$, also denoted by X, and given by

\[X(q) \coloneqq \left. \left(\frac{d}{dt} \exp (tX)\cdot q\right) \right|_{t=0}, \hspace{0.5cm} q \in M.\]

If $\hh$ is the Lie algebra of the isotropy subgroup $H < G$ fixing $p \in M$, then it can be characterized as those $X \in \gg$ such that $X(p)=0$. Given that, we can take a complementary $\text{Ad}(H)$-module $\mm$ to $\hh$ in $\gg$ and identify $\mm \cong T_pM$ via the above infinitesimal correspondence.

A homogeneous space $G/H$ is called \textit{reductive} if there exists such a complementary vector space $\mm$ such that for the respective Lie algebras of $G$ and $H$ 

\begin{equation*}
	\gg = \hh \oplus \mm, \hspace{0.5cm} \text{Ad}(H)(\mm) \subset \mm
\end{equation*}

This is always possible in the case of homogeneous Riemannian manifolds. This is due to a classic result on Riemannian geometry \cite[Chapter VIII, Lemma 4.2]{doC}, which states that an isometry is uniquely determined by the image of the point $p$ and its derivative at $p$, hence the isotropy subgroup $H$ is a closed subgroup of $SO(T_pM)$, and in particular it is compact. Indeed, if $\Ad(H)$ is compact, then one can average over an arbitrary inner product over $\gg$ to make it Ad($H$)-invariant and hence take $\mm \coloneqq \hh^{\perp}$. Given that, one can identify $\mm \cong T_{eH}G/H$ once and for all and with this identification there is a one-to-one correspondence between homogeneous metrics in $M \coloneqq G/H$, $p \cong eH$ and Ad($H$)-invariant inner products in $\mm$. 

In full generality, the Ricci flow is a non-linear partial differential equation. As mentioned in the introduction, in the case where $M$ is compact, Hamilton \cite{ham} proved short time existence and uniqueness for the Ricci flow. Then Chen and Zhu \cite{chenzu} proved the uniqueness of the flow within the class of complete and bounded curvature Riemannian manifolds, which includes the class of homogeneous manifolds. From its uniqueness it follows immediately that the Ricci flow preserves isometries. Thus a solution $g(t)$ from a $G$-invariant initial metric $g_0$ would remain $G$-invariant, and hence it is quite natural to consider a \textit{homogeneous Ricci flow}. We then get an autonomous nonlinear ordinary differential equation, 

\begin{align}\label{eq:ricci flow}
	\frac{dg(t)}{dt} &= -2\text{ric}(g(t)), \hspace{0.5cm} g(0)=g_0,
\end{align}
where the Ricci tensor can being seen as the following smooth map 
\[\text{ric}\colon (\text{Sym}^2(\mm))^{\Ad(H)}_+ \to (\text{Sym}^2(\mm))^{\Ad(H)}.\]
Here $(\text{Sym}^2(\mm))^{\Ad(H)}$ is the nontrivial vector space of $\Ad(H)$-invariant symmetric bilinear forms in $\mm$ and $(\text{Sym}^2(\mm))^{\Ad(H)}_+$ the open set of positive definite ones. By classical ODE theory, given an initial $G$-invariant metric $g_0$ corresponding to an initial $\Ad(H)$-invariant inner product, there is an unique Ad($H$)-invariant inner product solution corresponding to an unique family of $G$-invariant metrics $g(t)$ in $M$. 

The general formula for the Ricci curvature of a homogeneous Riemannian manifold $(G/H,g)$, \cite[Corollary 7.38]{be}, is given by 

\begin{align}\label{eq:ricci formula}
	\text{ric}_g(X,X)=&- \frac{1}{2}B(X,X)-\frac{1}{2}\sum_i |[X,X_i]_{\mm}|_g^2  + \frac{1}{4}\sum_{i,j}g\left(\left[X_i,X_j\right]_{\mm},X\right)^2 \\
	&- g\left(\left[H_g,X\right]_{\mm},X\right), \notag
\end{align}
where $B$ is the Killing form, $\{X_i\}_{i=1}^n$ is a $g$-orthonormal basis of $\mm$ and $H_g$ is the mean curvature vector defined by $g(H_g,X) \coloneqq\tr(\text{ad}_X)$. Immediately it follows that $H_g=0$ if and only if $\gg$ is unimodular. 

Let us now consider $\gg$ to be a non-compact semisimple Lie algebra. By classical structure theory on semisimple Lie algebras \cite[Chapter 13]{hn}, $\gg$ can be described in terms of its \textit{Cartan decomposition}
\[\gg = \kk \oplus \pp,\]
where $\kk$ is a compactly embedded Lie subalgebra of $\gg$ and $\pp$ is a $\kk$-submodule such that $[\pp,\pp] \subset \kk$. 

Moreover, the Killing form $B$ of $\gg$ is such that

\begin{equation*}
	B(\kk,\pp) =0, \hspace{0.5cm} 
	B|_{\kk\times\kk} <0, \hspace{0.5cm}
	\left.B\right| _{\pp \times \pp} >0,
\end{equation*}
and $-B|_{\kk\times\kk}+B|_{\pp\times\pp}$ defines an inner product on $\gg$ such that $\ad(\kk)$ are skew-symmetric maps and $\ad(\pp)$ are symmetric maps \cite[Lemma 13.1.3]{hn}.

Since the flow only depends on the Lie algebra $\gg$, we can take without loss of generality any $G$ connected with such Lie algebra. So for a $M=G/H$, with $G$ a semisimple non-compact Lie group, we can fix a background Cartan decomposition 
\[\gg = \kk \oplus \pp\]
such that the integral subgroup $K$ of $\kk$ is a maximal connected compact subgroup of $G$ with $H \subset K$ \cite[Theorem 14.1.3]{hn}. We call a homogeneous manifold $G/H$, with $G$ semisimple and $\Ad(H)$ compact a \textit{semisimple homogeneous space}.

Consider the orthogonal complement $\ll:= \hh^{\perp}$ of $\hh$ in $\kk$ with respect to the Killing form $B$. And let us do the identification
\begin{equation} \label{eq:awesome cartan decomposition}
	T_{eH}G/H \cong \mm = \ll \oplus \pp.
\end{equation} 
We will call then this reductive complement $\mm =\ll \oplus \pp$ \textit{adapted} to the Cartan decomposition $\gg= \kk \oplus \pp $

\begin{definition}[\textit{Awesome metric}]
	Let $G/H$ be a homogeneous space with $G$ semisimple and $\Ad(H)$ compact. An $\Ad(H)$-invariant inner product $g$ on the reductive complement $\mm$ is called \textit{awesome} if for some Cartan decomposition $\gg= \kk \oplus \pp$ for which $\mm =\ll \oplus \pp$ is adapted, we have that $g(\ll, \pp)=0$. In this case, we say that the awesome metric $g$ is adapted to the Cartan decomposition $\gg= \kk \oplus \pp$, with $\kk=\hh\oplus\ll$.
\end{definition}

As it was mentioned in the introduction, this non-empty set of metrics was introduced by Nikonorov in \cite{n}, where he proved that the set contains no Einstein metric. We will see that this set is actually Ricci flow invariant, proving to be a good test ground concerning the Dynamical Alekseevskii Conjecture. Semisimple homogeneous spaces $G/H$ such that the isotropy representation of $H$ on $\mm$ have inequivalent irreducible summands only admit awesome $G$-invariant metrics, and as such this set of metrics has an obvious spotlight in the literature. 

On the other hand, for example in the case of a Lie group with dimension larger than $3$, the set of awesome  metrics is a meager subset of the left-invariant metrics, and its dynamical properties under the phase space of the Ricci flow are largely unknown. Nikonorov also gave a necessary and sufficient algebraic condition for a semisimple homogeneous space $G/H$ to be such that every $G$-invariant metric is awesome \cite[Theorem 2]{n}. 

Via \eqref{eq:awesome cartan decomposition}, we have an one-to-one correspondence between the set of awesome $G$-invariant metrics on $G/H$ and the  open subset of positive definite $\Ad(H)$-invariant symmetric bilinear forms in $\mm$ such that $\ll \perp \pp$, which in turn is a linear subspace of $(\text{Sym}^2(\mm))^{\Ad(H)}$.

Manipulating the Ricci tensor formula \eqref{eq:ricci formula} on the awesome case we can directly prove the following lemma.

\begin{lemma}\label{lemma:awesome is RF invariant}
	Let $G/H$ be a semisimple homogeneous space. Then the set of $G$-invariant awesome metrics in $G/H$ is Ricci flow invariant.
\end{lemma}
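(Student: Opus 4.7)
The plan is to show that the linear subspace of awesome $\Ad(H)$-invariant inner products is tangent to the Ricci flow vector field. Since the awesome condition $g(\ll,\pp)=0$ cuts out a linear subspace of $(\mathrm{Sym}^2(\mm))^{\Ad(H)}$, and the Ricci flow is the ODE $\dot g = -2\,\ric(g)$, a standard ODE uniqueness argument reduces the problem to proving the infinitesimal statement: whenever $g$ is awesome, one has $\ric_g(\ll,\pp)=0$. Then the velocity $-2\,\ric(g)$ belongs to the awesome subspace at every awesome $g$, hence the unique solution stays in that subspace as long as it exists.

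To verify $\ric_g(X,Y)=0$ for $X\in\ll$, $Y\in\pp$, I would polarize the formula \eqref{eq:ricci formula} and analyze the four contributions separately, choosing a $g$-orthonormal basis of $\mm$ that splits as $\{Y_k\}\subset\ll$ together with $\{Z_\alpha\}\subset\pp$ (allowed precisely because $g$ is awesome). The Killing-form term $B(X,Y)$ vanishes because $B(\kk,\pp)=0$ and $\ll\subset\kk$. The mean-curvature term vanishes because $\gg$ is semisimple, hence unimodular, so $H_g=0$. The remaining two terms are sorted by exploiting the Cartan bracket relations
\[[\kk,\kk]\subset\kk,\qquad [\kk,\pp]\subset\pp,\qquad [\pp,\pp]\subset\kk,\]
which translate, after projection onto $\mm=\ll\oplus\pp$, into: $[\ll,\ll]_\mm, [\pp,\pp]_\mm \subset\ll$ and $[\ll,\pp]_\mm\subset\pp$.

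Applied to the quadratic term $\sum_i g([X,X_i]_\mm,[Y,X_i]_\mm)$: for $X_i=Y_k\in\ll$, the bracket $[X,Y_k]_\mm$ lies in $\ll$ while $[Y,Y_k]_\mm$ lies in $\pp$, so the inner product vanishes by awesomeness; for $X_i=Z_\alpha\in\pp$, the roles swap, and again the inner product vanishes. Applied to the structure-constant term $\sum_{i,j}g([X_i,X_j]_\mm,X)g([X_i,X_j]_\mm,Y)$: each bracket $[X_i,X_j]_\mm$ is either in $\ll$ or in $\pp$, so for $X\in\ll$ and $Y\in\pp$, at least one of the two factors pairs a $\ll$-vector with a $\pp$-vector under $g$, which is zero by awesomeness. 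Thus every contribution to $\ric_g(X,Y)$ vanishes.

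I do not expect a real obstacle here: the argument is a direct term-by-term check using (i) semisimplicity (kills $B(\kk,\pp)$ and $H_g$), (ii) the bracket parity $\mathbb{Z}/2$-grading of the Cartan decomposition, and (iii) the defining condition $g(\ll,\pp)=0$. The only mild subtlety is in handling the projections $[\cdot,\cdot]_\mm$: since $\hh\subset\kk$, the component of a $\kk$-valued bracket killed by projecting to $\mm$ lies entirely in $\hh\subset\kk$, so the projection preserves the $\kk/\pp$-grading of $\mm=\ll\oplus\pp$, which is exactly what makes the case analysis work.
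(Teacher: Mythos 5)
Your proof is correct and follows essentially the same strategy as the paper: polarize the Ricci formula \eqref{eq:ricci formula}, choose a $g$-orthonormal basis adapted to $\mm=\ll\oplus\pp$ (possible precisely because $g$ is awesome), and verify term-by-term that $\ric_g(\ll,\pp)=0$ using $B(\kk,\pp)=0$, unimodularity of $\gg$, the $\mathbb{Z}/2$-grading $[\kk,\kk]\subset\kk$, $[\kk,\pp]\subset\pp$, $[\pp,\pp]\subset\kk$ together with $\hh\subset\kk$, and $g(\ll,\pp)=0$. Your observation that each $[X_i,X_j]_\mm$ lies entirely in $\ll$ or entirely in $\pp$, so at least one factor in the structure-constant term is a $\ll$--$\pp$ pairing and hence vanishes, is a slightly cleaner packaging of the paper's explicit case analysis over index pairs.
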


\begin{proof}
Let $G/H$ be a semisimple homogeneous space. We need to show that the Ricci operator $\text{ric}: (\text{Sym}^2(\mm))^{\Ad(H)}_+ \to (\text{Sym}^2(\mm))^{\Ad(H)}$ takes an element $g$ such that $g(\ll,\pp)=0$ to a symmetric bilinear form $\ric_g$ such that $\ric_g(\ll,\pp)=0$. By polarizing the formula for the Ricci tensor \eqref{eq:ricci formula} on a homogeneous manifold $G/H$ with $G$ unimodular, we get 

\begin{equation*}
	2\text{ric}_g(X,Y)=  - B(X,Y)-\sum_i g([X,X_i]_{\mm}, [Y,X_i]_{\mm}) + \frac{1}{2}\sum_{i,j}g([X_i,X_j]_{\mm},X)g([X_i,X_j]_{\mm},Y). 
\end{equation*}

In our case $\{X_1, ..., X_{n+m}\}= \{X^\ll_1,\ldots,X^\ll_n ,X^\pp_1,\ldots, X^\pp_m\}$, where $\{X^\ll_i\}_{i=1}^n$ and $\{X^{\pp}_i\}_{i=1}^m$ are $g$-orthonormal basis for $\ll$ and $\pp$ respectively. We then get that for $X^{\ll} \in \ll$ and $X^\pp \in \pp$ 

\begin{align*}
	2\text{ric}_g\left(X^{\ll},X^\pp\right)=&  - B\left(X^{\ll},X^\pp\right)
	-\sum_i g\left(\left[X^{\ll},X_i\right]_{\mm}, \left[X^\pp,X_i\right]_{\mm}\right) \\
	&+\frac{1}{2}\sum_{i,j}g\left(\left[X_i,X_j\right]_{\mm},X^{\ll}\right)g\left(\left[X_i,X_j\right]_{\mm},X^\pp\right) \\
	=&-\sum_i g\left(\left[X^{\ll},X^\ll_i\right]_{\mm}, \left[X^\pp,X^\ll_i\right]_{\mm}\right) -\sum_i g\left(\left[X^{\ll},X^{\pp}_i\right]_{\mm}, \left[X^\pp,X^{\pp}_i\right]_{\mm}\right) \\
	&+ \frac{1}{2}\sum_{i,j}g\left(\left[X_i,X_j\right]_{\mm},X^{\ll}\right)g\left(\left[X_i,X_j\right]_{\mm},X^\pp\right) \\
	=&\ \frac{1}{2}\sum_{i,j}g\left(\left[X_i,X_j\right]_{\mm},X^{\ll}\right)g\left(\left[X_i,X_j\right]_{\mm},X^\pp\right).
\end{align*}
We used that $B(\ll, \pp)=0$ in the second equality and in the third equality we used both the Cartan decomposition relations $[\pp,\pp] \subset \kk$, $[\kk, \pp] \subset \pp$, $[\kk,\kk] \subset \kk$ and that $g(\ll,\pp)=0$. Moreover, by the same reason, we have that

\begin{align*}
\sum_{i,j}g([X_i,X_j]_{\mm},X^\ll)g([X_i,X_j]_{\mm},X^\pp) =& \sum_{i,j}g\left(\left[X^\ll_i,X^\ll_j\right]_{\mm},X^\ll\right)g\left(\left[X^\ll_i,X^\ll_j\right]_{\mm},X^\pp\right)\\
&+ \frac{1}{2}\sum_{i,j}g\left(\left[X^{\pp}_i,X^{\pp}_j\right]_{\mm},X^{\ll}\right)g\left(\left[X^{\pp}_i,X^{\pp}_j\right]_{\mm},X^{\pp}\right)\\
&+2\sum_{i,j}g\left(\left[X^{\ll}_i,X^{\pp}_j\right]_{\mm},X^{\ll}\right)g\left(\left[X^{\ll}_i,X^{\pp}_j\right]_{\mm},X^{\pp}\right) \\
=& \ 0.
\end{align*}

This means that the set of awesome metrics $\{g \in \left(\text{Sym}^2(\mm)\right)^{\Ad(H)}_+ \ \vert \ g(\ll,\pp)=0\}$ is an invariant subset for the Ricci fow equation \eqref{eq:ricci flow}.
\end{proof}

\begin{remark*}
	Nikonorov had already argued that $\ric_g(\ll,\pp)=0$ for $g$ awesome, in the particular case of the homogeneous space $SO(n,2)/SO(n)$, $n\geq2$ (see \cite[Example 1]{n}). 
	The isotropy representation of $SO(n,2)/SO(n)$, $n\geq2$, has three summands $\ll_1 \subset \kk$, $\pp_1 \subset \pp$, and $\pp_2\subset \pp$, and moreover $\ll_1$ is not isomorphic to $\pp_1$ or $\pp_2$, thus any $SO(n,2)$-invariant metric is awesome.
	He works this example out in more detail in order to show that $SO(n,2)/SO(n)$ admits $SO(n,2)$-invariant Ricci negative metrics but no Einstein metric. 
\end{remark*}

\section{Algebraic bounds for the Ricci curvature}\label{section:3}
We want to understand the long time behavior of an awesome metric under the homogeneous Ricci flow. In order to do that we want to compare an arbitrary awesome metric $g$ to a highly symmetric background metric.

Let us fix $Q := -B|_{\ll\times\ll} +B|_{\pp\times \pp}$ as a background metric. For a given Ad($H$)-invariant inner product $g$ on $\mm$, by Schur's lemma, we can decompose it on $Q$-orthogonal irreducible $\hh$-modules $\mm = \bigoplus_{i=1}^N \mm_i$ such that

\begin{align*}
	g = x_1\cdot Q |_{\mm_1\times\mm_1} \perp ... \perp x_N\cdot Q|_ {\mm_N\times\mm_N},
\end{align*}
for some positive numbers $x_1,...,x_N \in \mathbb{R}$. Note that this decomposition is not necessarily unique, except in the case where all irreducible modules are pairwise inequivalent. Also by Schur's lemma, in each irreducible summand $\mm_i$ the Ricci tensor is given by $\left.\text{ric}_g\right|_{\mm_i\times\mm_i}=r_i\cdot g|_{\mm_i\times\mm_i}$, for $r_1,...,r_N \in \mathbb{R}$. Observe that, in general, the mixed terms $\text{ric}_g(\mm_i,\mm_j)$ for $i\neq j$ are not zero when $\mm_i$ is equivalent to $\mm_j$ as \text{ad}($\hh$)-modules.

Now let $g$ be an awesome metric. Then there is a Cartan decomposition such that $g(\ll, \pp)=0$, hence we can adapt the above decomposition so that
\begin{equation}\label{eq:g diagonal}
	g = l_1 \cdot Q |_{\ll_1\times \ll_1} \perp ... \perp l_n \cdot Q|_ {\ll_v\times \ll_v} \perp p_1 \cdot Q |_ {\pp_1\times \pp_1} \perp ... \perp p_m \cdot Q|_ {\pp_m\times\pp_{m}},
\end{equation}
where $(\ll_1, ..., \ll_n,\pp_{1}, ..., \pp_{m} )=(\mm_1, ..., \mm_{n+m})$ with 
\[\ll = \oplus_{i=1}^n \ll_i \hspace{0.5cm} \text{and} \hspace{0.5cm} \pp = \oplus_{i=1}^m \pp_i,\]
and $(l_1,...,l_n,p_1,...,p_m)=(x_1,...,x_{n+m})$. 

Let us establish the notation $I_{\ll}:=\{1,...,n\}$ and $I_{\pp}:=\{n+1,...,n+m\}$, and let $d_i$ denote the dimension of $\mm_i$ for all $i \in \{1,...,n+m\}$. To simplify notation we are going to write $\hat{i} \coloneqq n+i$. Finally, let us denote
\begin{equation} \label{eq: ricci L diagonal}
	r^{\ll}_i\cdot g|_{\ll_i\times\ll_i}=\text{ric}_g|_{\ll_i\times\ll_i}, \hspace{1cm} \text{for} \hspace{0.2cm}  i=1,...,n 
\end{equation}
and 
\begin{equation}\label{eq:ricci P diagonal}
	r^{\pp}_{i}\cdot g|_{\pp_i\times\pp_i}=\text{ric}_g|_{\pp_i\times\pp_i}, \hspace{1cm} \text{for} \hspace{0.2cm}  i=1,...,m 
\end{equation}
where $(r^{\ll}_1,...,r^{\ll}_n, r^{\pp}_{1},..., r^{\pp}_{m} )=(r_1,...,r_{n+m})$. 

Let us take the following $Q$-orthonormal basis on $\gg$, $\{E_{\alpha}^0\}$ for
 $1 \leq \alpha \leq n$ on $\hh$, and $\{E_\alpha^i\}$ for $1 \leq \alpha \leq d_i$
 on each $\mm_i$,  $i=1, ..., n+m$. Then we can define the following brackets coefficients 
\[	[ijk]:=\sum_{\alpha, \beta, \gamma} Q\left(\left[E^i_ \alpha, E^j_\beta\right], E^ k_\gamma \right)^2. \]
By the Cartan decomposition we get that $\ad(\kk)$ are skew-symmetric and $\ad(\pp)$ are symmetric \cite[Lemma 13.1.3]{hn}, therefore the coefficient $[ijk]$ is invariant under permutations of the symbols $i, j, k$. 

By Schur's lemma we have that the Casimir operator of the $\hh$ action on the irreducible module $\mm_i$, $C_{\mm_i, \hh}:=-\sum_\alpha \left.\text{ad}\left(E_{\alpha}^0\right)\circ \text{ad}\left(E_{\alpha}^0\right)\right|_{\mm_i}$ is given by
\begin{equation}\label{eq:casimir}
	c_i \cdot \text{Id}_{\mm_i} =-\sum_\alpha\left.\text{ad}(E_{\alpha}^0)\circ \text{ad}(E_{\alpha}^0)\right|_{\mm_i},
\end{equation}
with $c_i \geq 0$.

By \cite[Lemma 1.5]{wz} (also \cite[Lemma 1]{n}), we have that for $i=1, \ldots, n+m$,
\begin{equation}\label {eq:d}
0 \leq \sum _{j,k} [ijk] = d_i(1-2c_i)\leq d_i.
\end{equation}

Indeed, a direct computation yields

\begin{align*}
	\sum _{j,k} [ijk]&=  \sum_{\substack{\alpha, \beta, \gamma \\ 1\leq j,k \leq n+m}} Q\left(\left[E^ i_\alpha, E^j_\beta\right], E^ k_\gamma \right)^2 \\
	&= \sum_{\substack{\alpha, \beta, \gamma \\ 0\leq j,k \leq n+m}} Q\left(\left[E^ i_\alpha, E^j_\beta\right], E^ k_\gamma \right)^2-2\sum_{\alpha, \beta,\gamma} Q\left(\left[E^ i_\alpha,E^0_\beta\right],E^i_\gamma\right)^2 \\ 
	&= \sum_{\substack{\alpha, \beta \\ 0\leq j \leq n+m}} Q\left(\left[ E^ i_\alpha, E^j_\beta \right],\left[ E^ i_\alpha, E^j_\beta\right]\right) -2\sum_{\alpha, \beta} Q\left(\left[E^0_\beta,E^ i_\alpha\right],\left[E^0_\beta,E^ i_\alpha\right]\right)\\	
	&=\sum_{\substack{\alpha, \beta, \gamma \\ 0\leq j \leq n+m}} \left \vert Q\left(E^j_\beta ,\left[E^ i_\alpha,\left[ E^ i_\alpha, E^j_\beta \right]\right]\right)\right \vert +2\sum_{\alpha, \beta} Q\left(E^ i_\alpha,\left[E^0_\beta,\left[E^0_\beta,E^ i_\alpha\right]\right]\right)\\
	&=\sum_\alpha \left\vert B\left(E^i_\alpha,E^i_\alpha\right)\right \vert -2\tr C_{\mm_i, \hh} = d_i -2c_id_i \leq d_i
\end{align*} 

Using the above orthonormal basis,  we have the following formula for the Ricci curvature $r_i$ on $\mm_i$, $i=1,..., n+m$, of an awesome metric on $\mm$ (see \cite[Lemma 2]{n}), 
\begin{equation}\label{eq:nikonorov ricci}
	r_i = \frac{b_i}{2x_i} + \frac{1}{4d_i}\sum_{j,k}[ijk]\left(\frac{x_i}{x_k x_j}-\frac{x_k}{x_i x_j}-\frac{x_j}{x_k x_i}\right),
\end{equation}
where $b_i =1$, if $i \in I_{\ll}$, and $b_i =-1$, if $i \in I_{\pp}$. Let us order $\{x_1,...,x_n\} =\{l_1, ...,l_n\}$ as follows
\[0<l_1 \leq ... \leq l_n,\]
and $ \{x_{n+1},...,x_{n+m}\} =\{p_1, ..., p_m\}$ as
\[\ 0<p_{1} \leq ... \leq p_m.\] 

In \cite{n} there are the following estimates for $r^{\pp}_1$ and $r^{\pp}_m$. First, since $\frac{p_1}{p_j}-\frac{p_j}{p_1}\leq 0$ for $1 \leq j \leq m$,

\begin{align}\label{eq:r1}
r^{\pp}_1 &= -\frac{1}{2p_1} + \frac{1}{4d_{\hat{1}}}\sum_{j,k}[\hat{1}jk]\left(\frac{p_1}{x_k x_j}-\frac{x_k}{p_1 x_j}-\frac{x_j}{x_k p_1}\right) \notag \\
& =-\frac{1}{2p_1} +\frac{1}{2d_{\hat{1}}}\sum_{\substack{\hat{j} \in I_{\pp} \\  k \in I_{\ll}}}[\hat{1}\hat{j}k]\left(\left(\frac{p_1}{p_j}-\frac{p_j}{p_1}\right)\frac{1}{l_k}-\frac{l_k}{p_j p_1}\right) \notag \\ 
&\leq -\frac{1}{2p_1} <0
\end{align}
and since $\frac{p_m}{p_j}-\frac{p_j}{p_m}\geq 0$ for $1 \leq j \leq m$,

\begin{align} \label{eq:rm}
	r^{\pp}_m \stackrel{\phantom{\eqref{eq:d}}}{=}& -\frac{1}{2p_m} +\frac{1}{2d_{\hat{m}}}\sum_{\substack{\hat{j} \in I_{\pp} \\ k \in I_{\ll}}}[\hat{m}\hat{j}k]\left(\left(\frac{p_m}{p_j}-\frac{p_j}{p_m}\right)\frac{1}{l_k}-\frac{l_k}{p_j p_m}\right) \notag \\
	\stackrel{\phantom{\eqref{eq:d}}}{\geq}&-\frac{1}{2p_m} -\frac{1}{2d_{\hat{m}}}\sum_{\substack{\hat{j} \in I_{\pp} \\ k \in I_{\ll}}}[\hat{m}\hat{j}k]\frac{l_k}{p_j p_m} \notag \\
	\stackrel{\eqref{eq:d}}{\geq}& -\frac{1}{2p_m}-\frac{l_n}{4p_1p_m}. 
\end{align} 

We now observe that for an awesome metric $g$ the Ricci tensor restricted to the tangent space $T_pK/H$, which can be identified with $\ll$ via \eqref{eq:awesome cartan decomposition}, splits nicely in terms of $\ll$ and $\pp$. Namely, for any $X \in \ll$, the Ricci tensor formula \eqref{eq:ricci formula} for an awesome metric gives us

\begin{align}\label{eq:ricci tensor of in terms of the induced metric}
	\ric_g(X,X)&=\text{ric}_{K/H}\left(X,X\right)-\frac{1}{2}\tr\left(\left.\text{ad}(X)\circ \text{ad}(X)\right|_{\pp}\right) \\
	&-\frac{1}{2}\sum_i \left|[X,X^{\pp}_i]_{\mm}\right|_g^2  + \frac{1}{4}\sum_{i,j}g\left(\left[X^{\pp}_i,X^{\pp}_j\right]_{\mm},X\right)^2,\notag
\end{align}
where $\{X^{\pp}_i\}$ is an $g$-orhtonormal basis for $\pp$ and $\text{ric}_{K/H}$ is the Ricci tensor on $K/H= K \cdot p$. In particular, we have the following lemma.

\begin{lemma} \label{lemma:compact ricci curv bound}
Let $(G/H,g)$ be a semisimple homogeneous space with an awesome $G$-invariant metric $g$. Then the Ricci curvature $r^{\ll}_n$ in the largest $\ll$-eigendirection of $g$ with respect to the background metric $Q$ satisfies
\begin{equation} \label{eq:rn}
	r^{\ll}_n \geq \frac{1}{4d_n}\sum_{\hat{j},\hat{k} \in I_{\pp}}[n\hat{j}\hat{k}]\left(\frac{2}{l_n} + \frac{l_n}{p_j p_k}-\frac{p_j}{l_n p_k}-\frac{p_k}{p_j l_n}\right)+ \frac{1}{4d_nl_n}\sum_{j,k \in I_{\ll}}[njk].
\end{equation} 
\end{lemma}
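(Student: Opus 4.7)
The plan is to specialize Nikonorov's formula \eqref{eq:nikonorov ricci} to $i=n\in I_\ll$ (so $b_n=1$, $x_n=l_n$) and then rearrange the resulting identity into \eqref{eq:rn} via a single scalar estimate combined with \eqref{eq:d}. The first step is a Cartan splitting: since $[\ll,\ll]\subset\kk$, $[\ll,\pp]\subset\pp$, and $Q$ makes $\kk\perp\pp$, the coefficient $[njk]$ vanishes whenever exactly one of $j,k$ lies in $I_\pp$. Only the purely $\ll\ll$ and purely $\pp\pp$ pieces survive, so
\begin{equation*}
r^\ll_n = \frac{1}{2l_n}+\frac{1}{4d_n}\sum_{j,k\in I_\ll}[njk]\,\frac{l_n^2-l_j^2-l_k^2}{l_n\,l_j l_k}+\frac{1}{4d_n}\sum_{\hat j,\hat k\in I_\pp}[n\hat j\hat k]\,\frac{l_n^2-p_j^2-p_k^2}{l_n\,p_j p_k}.
\end{equation*}

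Next I would insert the algebraic identity
\begin{equation*}
\frac{l_n^2-p_j^2-p_k^2}{l_n\,p_j p_k}=\left(\frac{2}{l_n}+\frac{l_n}{p_j p_k}-\frac{p_j}{l_n p_k}-\frac{p_k}{l_n p_j}\right)-\frac{2}{l_n}
\end{equation*}
into the $\pp\pp$ sum. This produces exactly the bracketed factor appearing in \eqref{eq:rn}, at the cost of an extra correction $-\tfrac{1}{2d_n l_n}\sum_{\hat j,\hat k\in I_\pp}[n\hat j\hat k]$. Matching terms with the right-hand side of \eqref{eq:rn} and clearing the common factor $4d_n l_n$, the lemma reduces to the scalar inequality
\begin{equation*}
2d_n+\sum_{j,k\in I_\ll}[njk]\,\frac{l_n^2-l_j^2-l_k^2}{l_j l_k}\;\geq\;\sum_{j,k\in I_\ll}[njk]+2\sum_{\hat j,\hat k\in I_\pp}[n\hat j\hat k].
\end{equation*}

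The last ingredient is the hypothesis that $l_n=\max_j l_j$: an elementary optimization of $a^2+b^2-ab$ on $[0,l_n]^2$ gives $l_j^2+l_k^2-l_j l_k\leq l_n^2$, equivalently the pointwise bound $\frac{l_n^2-l_j^2-l_k^2}{l_j l_k}\geq -1$ for all $l_j,l_k\leq l_n$. Substituting this into the $\ll\ll$ sum above collapses the residual inequality to $d_n\geq\sum_{j,k}[njk]$, which is exactly \eqref{eq:d}. The only place requiring real care is the sign bookkeeping in the $\pp\pp$ rewrite; once the correction $-\tfrac{2}{l_n}$ is matched correctly, the inequality is forced by \eqref{eq:d} together with the ordering $l_1\leq\cdots\leq l_n$.
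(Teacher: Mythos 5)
Your proof is correct and uses the same two essential ingredients as the paper: the elementary estimate $l_j^2+l_k^2-l_jl_k\le l_n^2$ for $l_j,l_k\le l_n$, and the bound $\sum_{j,k}[njk]\le d_n$ from \eqref{eq:d}. The only difference is organizational — you isolate a clean scalar inequality by clearing denominators before substituting the two bounds, whereas the paper splits the constant $\tfrac{1}{2l_n}$ into the two sums inline via \eqref{eq:d} and then applies the quadratic estimate — but these are the same argument in different bookkeeping. Your derivation of the quadratic estimate (optimizing $a^2+b^2-ab$ on $[0,l_n]^2$) is a bit more direct than the paper's chain of inequalities, and your observation that the mixed $\ll\pp$ coefficients $[njk]$ vanish by the Cartan relations is the same splitting step the paper uses implicitly when reorganizing the sum.
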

\begin{proof}
Observe that as computed in \cite[Theorem 1]{n} and in \cite[Theorem 3.1]{boe} for $ i>j \in I_\ll$

\begin{align*}
 l_j^2-2l_il_j+l_i^2 = \left(l_j - l_i\right)^2 &\leq \left(l_n-l_j\right)\left(l_i-l_j\right) \\
&=  l_n l_i -l_i l_j - l_nl_j+l_j^2 \\
&\leq l_n^2-l_i l_j-l_n l_j + l_j l_n \\
&=l_n^2-l_i l_j
\end{align*}
hence $l_n^2 -l_j^2-l_i^2+l_jl_i  \geq 0 $
and the formula for $r_n$ with the Cartan decomposition relations then yields

	\begin{align*}
	r^{\ll}_n \stackrel{\phantom{\eqref{eq:d}}}{=}& \frac{1}{2l_n} + \frac{1}{4d_n}\sum_{j,k}[njk]\left(\frac{l_n}{x_k x_j}-\frac{x_j}{l_n x_j}-\frac{x_j}{x_k l_n}\right) \\
	\stackrel{\phantom{\eqref{eq:d}}}{=}&\frac{d_n}{2l_nd_n} + \frac{1}{4d_n}\sum_{\hat{j},\hat{k} \in I_{\pp}}[n\hat{j}\hat{k}]\left(\frac{l_n}{p_k p_j}-\frac{p_i}{l_n p_j}-\frac{p_j}{p_k l_n}\right)+ \frac{1}{4d_n}\sum_{j,k \in I_{\ll}}[njk]\left(\frac{l_n}{l_k l_j}-\frac{l_k}{l_n l_j}-\frac{l_j}{l_k l_n}\right) \\
	\stackrel{\eqref{eq:d}}{\geq}& \frac{1}{4d_n}\sum_{\hat{j},\hat{k} \in I_{\pp}}[n\hat{j}\hat{k}]\left(\frac{2}{l_n} + \frac{l_n}{p_k p_j}-\frac{p_k}{l_n p_j}-\frac{p_j}{p_k l_n}\right)+ \frac{1}{4d_n}\sum_{j,k \in I_{\ll}}[njk]\left(\frac{2}{l_n} + \frac{l_n}{l_k l_j}-\frac{l_k}{l_n l_j}-\frac{l_j}{l_k l_n}\right) \\
	\stackrel{\phantom{\eqref{eq:d}}}{=}& \frac{1}{4d_n}\sum_{\hat{j},\hat{k} \in I_{\pp}}[n\hat{j}\hat{k}]\left(\frac{2}{l_n} + \frac{l_n}{p_kp_j}-\frac{p_k}{l_n p_j}-\frac{p_j}{p_k l_n}\right)+ \frac{1}{4d_nl_n}\sum_{j,k \in I_{\ll}}[njk]\left(1+\frac{l_n^2 -l_j^2-l_k^2+l_jl_k}{l_kl_j}\right) \\
	\stackrel{\phantom{\eqref{eq:d}}}{\geq}& \frac{1}{4d_n}\sum_{\hat{j},\hat{k} \in I_{\pp}}[n\hat{j}\hat{k}]\left(\frac{2}{l_n} + \frac{l_n}{p_k p_j}-\frac{p_k}{l_n p_j}-\frac{p_j}{p_k l_n}\right)+ \frac{1}{4d_nl_n}\sum_{j,k \in I_{\ll}}[njk].
\end{align*}
\end{proof}

\begin{remark*}\label{remark:toral case}
Observe that in general $\sum_{j,k \in I_{\ll}}[njk]$ may be equal to zero. But in the case when $K/H$ is not a torus, $[\ll,\ll]_{\mm} \neq0$ and we can consider the largest eigenvalue $l_{n'}$ such that $[\ll_{n'},\ll]_{\mm} \neq 0$, for which $\sum_{j,k \in I_{\ll}}[n'jk]>0$. 
\end{remark*}

We have also the following alternative for the values of $r^{\ll}_n$ and $r^{\pp}_m$. 
\begin{lemma} \label{lemma:Ricci dichotomy}
Let $(G/H,g)$ be a semisimple homogeneous space with an awesome $G$-invariant metric $g$. Then we have the following dichotomy for the Ricci curvature of $g$ in the largest $\pp$(respectively $\ll$)-eigendirection of $g$ with respect to $Q$.
\begin{enumerate}
\item If $p_m - p_1 \geq l_n$, then
 \begin{equation} \label{eq:option1}
	\hspace{1.8cm} r^{\pp}_m \geq -\frac{1}{4p_m} - \frac{1}{4p_1} \hspace{0.5cm} \text{and} \hspace{0.5cm}
	r^{\ll}_n \geq \frac{1}{4l_n}\left(2 -\frac{p_m}{p_1}-\frac{p_1}{p_m}\right).
\end{equation} 
\item If $p_m - p_1 \leq l_n$, then 
\begin{equation}\label{eq:option2}
	\hspace{-1.5cm}r^{\pp}_m \geq -\frac{1}{2p_m}-\frac{l_n}{4p_1p_m} \hspace{0.5cm} \text{and} \hspace{0.5cm}
	r^{\ll}_n \geq 0.
\end{equation}
\end{enumerate}
\end{lemma}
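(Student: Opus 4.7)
The statement is a two-case refinement in which Case 2 essentially reuses the existing bounds while Case 1 trades sharpness in $r^{\ll}_n$ for a better bound on $r^{\pp}_m$. My plan is to treat each of the four inequalities in turn, starting from the two tools already in the text: the bound \eqref{eq:rm} on $r^{\pp}_m$ and Lemma \ref{lemma:compact ricci curv bound} on $r^{\ll}_n$.

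For Case 2 ($p_m-p_1\le l_n$) the estimate $r^{\pp}_m\ge -\tfrac{1}{2p_m}-\tfrac{l_n}{4p_1p_m}$ is precisely \eqref{eq:rm}, so nothing new is needed. For $r^{\ll}_n\ge 0$ I would rewrite the coefficient in the first sum of Lemma \ref{lemma:compact ricci curv bound} as
\[
\frac{2}{l_n}+\frac{l_n}{p_jp_k}-\frac{p_j}{l_np_k}-\frac{p_k}{l_np_j}
= \frac{l_n^2-(p_j-p_k)^2}{l_n\,p_j\,p_k}.
\]
Since $p_1\le p_j,p_k\le p_m$ one has $|p_j-p_k|\le p_m-p_1\le l_n$ by hypothesis, so each summand is nonnegative; the second sum $\tfrac{1}{4d_n l_n}\sum_{j,k\in I_\ll}[njk]$ is nonnegative tautologically. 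Hence $r^{\ll}_n\ge 0$.

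For Case 1 ($p_m-p_1\ge l_n$) the bound on $r^{\pp}_m$ comes straight out of \eqref{eq:rm} by a monotonic substitution: $l_n\le p_m-p_1$ gives $\tfrac{l_n}{4p_1p_m}\le\tfrac{p_m-p_1}{4p_1p_m}=\tfrac{1}{4p_1}-\tfrac{1}{4p_m}$, whence
\[
r^{\pp}_m \ge -\frac{1}{2p_m}-\frac{l_n}{4p_1p_m}\ge -\frac{1}{2p_m}-\frac{1}{4p_1}+\frac{1}{4p_m}=-\frac{1}{4p_m}-\frac{1}{4p_1}.
\]
The only nontrivial step is the $r^{\ll}_n$ estimate in Case 1. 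Starting from Lemma \ref{lemma:compact ricci curv bound} I drop the nonnegative sum over $I_\ll\times I_\ll$ and the nonnegative $\tfrac{l_n}{p_jp_k}$ term from each $(j,k)$-summand, leaving
\[
r^{\ll}_n \ge \frac{1}{4d_n}\sum_{\hat{j},\hat{k}\in I_{\pp}}[n\hat{j}\hat{k}]\left(\frac{2}{l_n}-\frac{p_j}{l_np_k}-\frac{p_k}{l_np_j}\right).
\]
The key observation is that on $[p_1,p_m]$ the convex function $t+1/t$ is maximized at the endpoints of the ratio range, so $\tfrac{p_j}{p_k}+\tfrac{p_k}{p_j}\le \tfrac{p_m}{p_1}+\tfrac{p_1}{p_m}$ for every admissible $(j,k)$. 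Substituting gives a uniform per-summand lower bound
\[
\frac{2}{l_n}-\frac{p_j}{l_np_k}-\frac{p_k}{l_np_j} \;\ge\; \frac{1}{l_n}\left(2-\frac{p_m}{p_1}-\frac{p_1}{p_m}\right),
\]
and the right-hand side is nonpositive. Finally, since this common lower bound is $\le 0$ and the bracket sum satisfies $\sum_{\hat{j},\hat{k}\in I_\pp}[n\hat{j}\hat{k}]\le\sum_{j,k}[njk]\le d_n$ by \eqref{eq:d}, multiplying makes the estimate more negative (the correct direction for a lower bound) and the $d_n$'s cancel, yielding exactly $r^{\ll}_n\ge\tfrac{1}{4l_n}(2-\tfrac{p_m}{p_1}-\tfrac{p_1}{p_m})$. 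The main care point throughout is keeping track of signs, so that dropping terms and enlarging bracket sums move the bound in the intended direction in each case.
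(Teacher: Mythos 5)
Your proof is correct and follows the same route as the paper: both cases use \eqref{eq:rm} for $r^{\pp}_m$ and Lemma \ref{lemma:compact ricci curv bound} together with \eqref{eq:d} for $r^{\ll}_n$. The only difference is that you spell out the Case~1 estimate for $r^{\ll}_n$ (dropping the nonnegative terms, the uniform per-summand bound via monotonicity of $t+1/t$ for $t\ge 1$, and the sign argument when enlarging the bracket sum to $d_n$), whereas the paper asserts that inequality without these intermediate steps.
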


\begin{proof}
		In the first case, i.e. if $p_m - p_1 \geq l_n $, then 
		
		\begin{align*}
		r^{\pp}_m \stackrel{\eqref{eq:rm}}{\geq}& -\frac{1}{2p_m}-\frac{l_n}{4p_1p_m} \\
		\stackrel{\phantom{\eqref{eq:rm}}}{\geq}& -\frac{1}{2p_m}+\frac{p_1-p_m}{4p_1p_m} \\
		\stackrel{\phantom{\eqref{eq:rm}}}{=}&-\frac{1}{4p_m}-\frac{1}{4p_1}.
	\end{align*}
	Moreover, by Lemma \ref{lemma:compact ricci curv bound} we have that
	
	\begin{align*}
		r^{\ll}_n &\geq \frac{1}{4d_n}\sum_{\hat{j},\hat{k} \in I_\pp}[n\hat{j}\hat{k}]\left(\frac{2}{l_n} + \frac{l_n}{p_k p_j}-\frac{p_k}{l_n p_j}-\frac{p_j}{p_k l_n}\right)+ \frac{1}{4d_nl_n}\sum_{j,k \in \ll}[njk]\\
		&\geq \frac{1}{4l_n}\left(2 -\frac{p_m}{p_1}-\frac{p_1}{p_m}\right),
	\end{align*}
	and we get the first inequalities.
	
In the second case, as in \cite{n}, observe that

	\begin{align*}
		p_m - p_1 \leq l_n  \iff& |p_j - p_k| \leq l_n, \forall j,k \in I_\pp \\
		\iff& \left(p_j^2 - 2p_jp_k + p_k^2 \right) \leq l_n^2 \\ 
		\iff& \left(\frac{p_j}{p_k l_n}-\frac{2}{l_n} + \frac{p_k}{p_jl_n}\right) \leq \frac{l_n}{p_j p_k} \\
		\iff& -\frac{2}{l_n} \leq  \frac{l_n}{p_j p_k} -\frac{p_j}{p_k l_n} - \frac{p_k}{p_jl_n}
	\end{align*}
	and therefore, substituting this in the estimate for $r^\ll_n$ in \eqref{eq:rn} we get that $r^\ll_n \geq 0$. Together with the estimate \eqref{eq:rm} for $r^\pp_m$, we get the desired inequalities.
\end{proof}

We can combine the estimates above to get a first scale-invariant estimate that will be essential for us in the dynamical analysis to come in Section \ref{section:4}

\begin{lemma} \label{lemma:Ricci final estimate}
Let $(G/H,g)$ be a semisimple homogeneous space with an awesome $G$-invariant metric $g$. Let us consider the eigenvalues of $g$ with respect to $Q$ as established in \eqref{eq:g diagonal}, \eqref{eq: ricci L diagonal}, and \eqref{eq:ricci P diagonal}. Then

\begin{equation}\label{eq:geometric inequality1}
2(p_mr^{\pp}_m+l_nr^{\ll}_n) \geq -\frac{p_m+l_n}{p_1}.
\end{equation}
\end{lemma}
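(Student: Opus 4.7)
The plan is to derive the inequality directly from the dichotomy given in Lemma \ref{lemma:Ricci dichotomy}, by treating each of the two cases separately and combining the lower bounds for $r^{\pp}_m$ and $r^{\ll}_n$ into the desired estimate on $2(p_m r^{\pp}_m + l_n r^{\ll}_n)$. Since the right-hand side $-\frac{p_m+l_n}{p_1}$ is linear in $p_m$ and $l_n$, and the dichotomy already outputs bounds that are essentially linear in these eigenvalues after multiplication by $p_m$ and $l_n$ respectively, the task reduces in each case to a short algebraic verification using only the ordering $p_1 \leq p_m$.

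In the first case, $p_m - p_1 \geq l_n$, I would multiply the first estimate in \eqref{eq:option1} by $2p_m$ and the second by $2l_n$, obtaining
\[
2p_m r^{\pp}_m + 2 l_n r^{\ll}_n \;\geq\; -\tfrac{1}{2} - \tfrac{p_m}{2p_1} + 1 - \tfrac{p_m}{2p_1} - \tfrac{p_1}{2p_m} \;=\; \tfrac{1}{2} - \tfrac{p_m}{p_1} - \tfrac{p_1}{2p_m}.
\]
Comparing with $-\frac{p_m+l_n}{p_1}$, the required inequality becomes $\tfrac{1}{2} - \tfrac{p_1}{2p_m} + \tfrac{l_n}{p_1} \geq 0$, which holds since $p_1 \leq p_m$ implies $\tfrac{p_1}{2p_m} \leq \tfrac{1}{2}$, and $l_n, p_1 > 0$.

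In the second case, $p_m - p_1 \leq l_n$, I would use \eqref{eq:option2}: multiplying the bound for $r^{\pp}_m$ by $2p_m$ gives
\[
2p_m r^{\pp}_m + 2 l_n r^{\ll}_n \;\geq\; -1 - \tfrac{l_n}{2p_1} + 0,
\]
and comparing with $-\tfrac{p_m+l_n}{p_1}$ the inequality reduces to $\tfrac{p_m}{p_1} + \tfrac{l_n}{2p_1} \geq 1$, which is immediate from $p_m \geq p_1$.

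The argument is essentially routine once the dichotomy of Lemma \ref{lemma:Ricci dichotomy} is in place; there is no genuine obstacle. The only point requiring a little care is bookkeeping the signs and making sure the worst case of the elementary inequality $\tfrac{p_1}{2p_m} \leq \tfrac{1}{2}$ is exactly what balances the Ricci-negative term $-\tfrac{p_m}{p_1}$ coming from the $\pp$-direction against the potentially positive contribution $\tfrac{l_n}{p_1}$ coming from the compact fiber. This asymmetry between the two cases is precisely what the scale-invariant bound \eqref{eq:geometric inequality1} is designed to package uniformly for use in the dynamical estimates of Section \ref{section:4}.
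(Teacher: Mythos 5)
Your proof is correct and follows essentially the same route as the paper: both arguments invoke the dichotomy of Lemma \ref{lemma:Ricci dichotomy}, multiply the two bounds by $2p_m$ and $2l_n$ respectively, and close each case with elementary algebra using only $p_1 \leq p_m$ and positivity of the eigenvalues. The only difference is cosmetic — the paper rearranges the first-case expression as $-\tfrac{p_m}{p_1} + \tfrac{p_m-p_1}{2p_m}$ before dropping the nonnegative remainder, while you compare directly against the target; the two manipulations are equivalent.
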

\begin{proof}
We know that if $p_m-p_1 \geq l_n$, then 

\begin{align*}
	2\left(p_mr^{\pp}_m+l_nr^{\ll}_n\right) \stackrel{\eqref{eq:option1}}{\geq}& -\frac{1}{2} -\frac{p_m}{2p_1} +\frac{1}{2}\left(2 -\frac{p_m}{p_1}-\frac{p_1}{p_m}\right) \\
	\stackrel{\phantom{\eqref{eq:option1}}}{=}& \frac{1}{2} -\frac{p_m}{p_1}-\frac{p_1}{2p_m} = -\frac{p_m}{p_1} + \frac{p_m-p_1}{2p_m}\\
	\stackrel{\phantom{\eqref{eq:option1}}}{\geq}& -\frac{p_m}{p_1} \geq -\frac{p_m + l_n}{p_1}.
\end{align*}
And if $p_m-p_1 \leq l_n$, then

\begin{align*}
2\left(p_mr^{\pp}_m+l_nr^{\ll}_n\right) &\stackrel{\eqref{eq:option2}}{\geq} -1-\frac{l_n}{2p_1} \geq -\frac{2p_1+2l_n}{2p_1} \geq -\frac{p_m + l_n}{p_1}.
\end{align*}
\end{proof}

We can observe in the above computations that the advantage of working with awesome metrics is that the Ricci curvature in the eigenvectors of the metric tangent to $K/H \subset G/H$ splits nicely, since the algebraic and the metric decompositions are compatible. Lemmas \ref{lemma:Ricci dichotomy} and \ref{lemma:Ricci final estimate} give us our first estimates on the Ricci curvature which we will be able to exploit in the next section in order to examine the long-time behavior of the Ricci flow on awesome metrics. 

\section{The Dynamical Alekseevskii Conjecture for awesome metrics}\label{section:4}
In this section we will prove Theorem \ref{thm:A}. In order to do that, we must use the algebraic estimates we got on the last section in order to get dynamical estimates for the eigenvalues of the Ricci flow solution $g(t)$ (which from here onward we will also denote by $g_t$) with respect to our background metric $Q\coloneqq-B|_{\ll\times\ll} +B|_{\pp\times\pp}$.

The next lemma is equivalent to the one in \cite[Lemma B.40]{chow}, but here we use for better convenience upper left-hand Dini derivatives.

\begin{lemma} [Lemma B.40, \cite{chow}] \label{lemma:Dini}
	Let $C$ be a compact metric space, $I$ an interval of $\mathbb{R}$, and a function $g: C \times I \to \mathbb{R}$, such that $g$ and $\frac{\partial g}{\partial t}$ are continuous. Define $\phi \colon I \to \mathbb{R}$ by
	\[\phi(t) \coloneqq \sup_{x \in C}g(t,x)\]
	and its upper left-hand Dini derivative by
	\[\frac{d^-\phi(t) }{dt} \coloneqq \limsup_{h \to 0^+}\frac{\phi(t)-\phi(t-h)}{h}.\]
	
	Let $C_t \coloneqq \{x \in C \ | \ \phi(t)=g(t,x)\}$. We have that $\phi$ is continuous and that for any $t \in I$
	\[\frac{d^-\phi(t) }{dt} = \min_{x \in C_t}\frac{\partial g}{\partial t}(t,x).\]
\end{lemma}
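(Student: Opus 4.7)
My plan is to verify two preliminary facts — continuity of $\phi$ and compactness of $C_t$ — and then separately prove the two inequalities between $\frac{d_L^+\phi(t)}{dt}$ and $\min_{x \in C_t}\frac{\partial g}{\partial t}(t,x)$.

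First, since $C$ is compact and $g$ is continuous, on any compact subinterval $J \subset I$ the function $g$ is uniformly continuous on $C \times J$; the pointwise bound $|\phi(t) - \phi(s)| \leq \sup_{x \in C} |g(t,x) - g(s,x)|$ then yields continuity of $\phi$. Moreover $C_t$ is a nonempty closed subset of $C$, hence compact, and since $\frac{\partial g}{\partial t}$ is continuous the minimum $\min_{x \in C_t}\frac{\partial g}{\partial t}(t,x)$ is attained.

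For the upper bound I would fix $x_0 \in C_t$ and use $\phi(t) = g(t, x_0)$ together with $\phi(t-h) \geq g(t-h, x_0)$ to get
\[\frac{\phi(t) - \phi(t-h)}{h} \leq \frac{g(t,x_0) - g(t-h,x_0)}{h}.\]
Letting $h \to 0^+$ gives $\frac{d_L^+\phi(t)}{dt} \leq \frac{\partial g}{\partial t}(t, x_0)$, and minimizing over $x_0 \in C_t$ yields one inequality.

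For the lower bound, which I expect to be the main obstacle, I would choose a sequence $h_k \to 0^+$ realizing the $\limsup$, and for each $k$ pick a maximizer $x_k \in C_{t-h_k}$ (nonempty by the same compactness argument). Then $\phi(t-h_k) = g(t-h_k, x_k)$ and $\phi(t) \geq g(t, x_k)$, so by the mean value theorem in the $t$-variable
\[\frac{\phi(t) - \phi(t-h_k)}{h_k} \geq \frac{g(t,x_k) - g(t-h_k, x_k)}{h_k} = \frac{\partial g}{\partial t}(\tau_k, x_k)\]
for some $\tau_k \in (t-h_k, t)$. Passing to a subsequence, $x_k \to x_\infty \in C$ by compactness; continuity of $g$ combined with the continuity of $\phi$ established above forces $g(t, x_\infty) = \phi(t)$, so $x_\infty \in C_t$. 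Continuity of $\partial_t g$ then gives $\frac{\partial g}{\partial t}(\tau_k, x_k) \to \frac{\partial g}{\partial t}(t, x_\infty)$, whence
\[\frac{d_L^+\phi(t)}{dt} \geq \frac{\partial g}{\partial t}(t, x_\infty) \geq \min_{x \in C_t}\frac{\partial g}{\partial t}(t, x),\]
closing the inequality. The essential subtlety is forcing the maximizers of $g(t-h_k, \cdot)$ to accumulate at a maximizer of $g(t, \cdot)$; this is the one place where compactness of $C$ and the joint continuity of $g$ and $\partial_t g$ are all used.
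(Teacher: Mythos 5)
The paper offers no proof of this statement: it is cited directly from \cite[Lemma B.40]{chow}, with the remark that the paper's version merely swaps to upper left-hand Dini derivatives, so there is no in-paper argument to compare against. Your proof is correct and is the standard one. The continuity of $\phi$ via uniform continuity of $g$ on $C\times J$ is right, and you need it later. The $\leq$ direction from fixing $x_0\in C_t$ and bounding $\phi(t-h)\geq g(t-h,x_0)$ is clean. For the $\geq$ direction you correctly pick a limsup-realizing sequence $h_k\to 0^+$, maximizers $x_k\in C_{t-h_k}$, apply the mean value theorem to get $\frac{\phi(t)-\phi(t-h_k)}{h_k}\geq \frac{\partial g}{\partial t}(\tau_k,x_k)$, and then use compactness plus the continuity of $\phi$, $g$, and $\partial_t g$ to extract $x_\infty\in C_t$ with $\frac{d_L^+\phi(t)}{dt}\geq \frac{\partial g}{\partial t}(t,x_\infty)$. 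You rightly flag the accumulation of the maximizers $x_k$ in $C_t$ as the essential point. One small remark worth making explicit: the existence of a sequence realizing the limsup as a finite number is guaranteed because your $\leq$ argument bounds $\frac{d_L^+\phi(t)}{dt}$ above, and the mean value estimate (with $\partial_t g$ bounded on $C\times J$) bounds the difference quotients below, so the limsup is finite; this is implicit in your write-up but deserves a sentence in a polished version.
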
 
At first, we will apply the lemma above to
$-p_1(t) = \max\{-g_t(X,X) \ | \ X \in \pp, \Vert X\Vert_Q=1\}$, $p_m(t) = \max\{g_t(X,X) \ | \ X \in \pp, \Vert X\Vert_Q=1\}$, and $l_n(t) = \max\{g_t(X,X) \ | \ X \in \ll, \Vert X\Vert_Q=1\}$. As well as to 

\begin{align*}
	\log(p_m +l_n)(t)&=\log(\max\{(g_t(X,X)+ g_t(Y,Y)) \ | \ X \in \pp, \ Y \in \ll, \ \Vert X\Vert_Q=1, \ \Vert Y\Vert _Q=1\})\\
	&=\max\{\log(g_t(X,X)+ g_t(Y,Y)) \ | \ X \in \pp, \ Y \in \ll, \ \Vert X\Vert _Q=1, \ \Vert Y\Vert _Q=1\}. 
\end{align*}

This lemma will be fundamental to us when combined with the following elementary real analysis result. 
\begin{lemma} \label{lemma:Dini Dynamic}
Let $[a,b]$ be a closed interval of $\mathbb{R}$ and $f: [a,b] \to \mathbb{R}$ a continuous function such that $\frac{d^-f}{dt} \leq 0$ and $f(a)=0$. Then $f(t) \leq 0$. 
\end{lemma}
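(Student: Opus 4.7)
The plan is to argue by contradiction using a standard perturbation-and-extremum trick. The hypothesis $\frac{d^+f}{dt}\leq 0$ is non-strict, so attaining a maximum of $f$ somewhere does not by itself produce a contradiction; a small linear tilt will upgrade the inequality to a strict one, after which the contradiction at the maximum is immediate.

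Concretely, suppose for contradiction that $f(t_1)>0$ for some $t_1\in(a,b]$. The first step is to introduce the perturbation $g_\varepsilon(t)\coloneqq f(t)-\varepsilon(t-a)$ for some $\varepsilon>0$ chosen small enough that $g_\varepsilon(t_1)>0$ still holds; then $g_\varepsilon(a)=0$, $g_\varepsilon$ is continuous, and since the linear term is classically differentiable with slope $\varepsilon$ one gets $\frac{d^+ g_\varepsilon}{dt}(t)=\frac{d^+ f}{dt}(t)-\varepsilon\leq-\varepsilon<0$ for every $t\in(a,b]$. The second step is to let $t_0\in[a,t_1]$ be a point where the continuous function $g_\varepsilon$ attains its maximum on the compact interval $[a,t_1]$; since this maximum is at least $g_\varepsilon(t_1)>0=g_\varepsilon(a)$, one has $t_0>a$. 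The final step is to observe that for any $h\in(0,t_0-a)$ the point $t_0-h$ lies in $[a,t_1]$, so $g_\varepsilon(t_0-h)\leq g_\varepsilon(t_0)$, and therefore the left difference quotient $(g_\varepsilon(t_0)-g_\varepsilon(t_0-h))/h$ is non-negative; passing to the limsup as $h\to 0^+$ gives $\frac{d^+ g_\varepsilon}{dt}(t_0)\geq 0$, which contradicts the strict bound $\leq-\varepsilon$ obtained above.

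There is no real obstacle in this argument, as it is elementary real analysis; the only subtlety worth flagging is that it is essential the Dini derivative in the hypothesis be the \emph{left}-hand one, since only the quotient from the left is controlled by the behaviour of $g_\varepsilon$ near its maximum (from the right one would only obtain $\leq 0$, with no contradiction). The linear tilt $\varepsilon(t-a)$ is the standard workaround for the non-strict hypothesis, and one does not even need to let $\varepsilon\to 0$ afterwards, as a single sufficiently small $\varepsilon$ already produces the contradiction.
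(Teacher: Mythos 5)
Your proof is correct. The paper states this lemma without proof (it is labeled as an elementary real analysis result), so there is no argument in the paper to compare against, but your perturbation-and-maximum argument is the standard one and every step is sound: the tilt $g_\varepsilon(t) = f(t) - \varepsilon(t-a)$ strictly decreases the upper left Dini derivative, a positive value of $f$ forces an interior (or right-endpoint) maximum of $g_\varepsilon$ on $[a,t_1]$, and the left difference quotient at that maximum is non-negative, contradicting $\frac{d^+g_\varepsilon}{dt}\leq -\varepsilon$. Your flag that the argument genuinely requires the \emph{left}-hand Dini derivative (as defined in the paper's Lemma \ref{lemma:Dini}) is a worthwhile observation, since the maximum point could be the right endpoint $t_1$, where a right-hand quotient need not even be defined.
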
 

With Lemmas \ref{lemma:Dini} and \ref{lemma:Dini Dynamic} at hand, we can obtain our first main estimate for analyzing the long-time behavior of the Ricci flow on an awesome metric. Indeed, using the algebraic estimates in the previous section we get that 

\begin{lemma}\label{lemma:at most linear growth}
	Let $(G/H,g(t))$, $t \in [0,T)$, be an awesome Ricci flow adapted to the Cartan decomposition $\gg = \kk \oplus \pp$. Then we have the following upper bound for the growth of $p_m(t)$ and $l_n(t)$,
	\begin{equation}
		t +p_1(0)\leq p_1(t)
	\end{equation}
	and
	\begin{equation}
		(p_m+l_n)(t) \leq (t+p_1(0))\frac{(p_m+l_n)(0)}{p_1(0)}.
	\end{equation}
\end{lemma}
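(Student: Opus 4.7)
My plan is to combine the scale-invariant algebraic inequality of Lemma \ref{lemma:Ricci final estimate} with the Dini derivative machinery of Lemmas \ref{lemma:Dini} and \ref{lemma:Dini Dynamic} to produce a differential inequality for $\log(p_m + l_n)(t)$ whose integration yields the claimed bound. As preparation, I would first establish that $p_1(t) \geq p_1(0) + t$. Applying Lemma \ref{lemma:Dini} to $(t, X) \mapsto -g_t(X,X)$ on the $Q$-unit sphere of $\pp$ (whose supremum is $-p_1(t)$), noting that any minimizer $X_* \in C_t$ is a $Q$-unit vector in the smallest $\pp$-eigenspace of $g_t$ with respect to $Q$ (so $\text{ric}_{g_t}(X_*,X_*) = r^\pp_1 p_1$), and invoking the bound $r^\pp_1 \leq -\tfrac{1}{2p_1}$ from \eqref{eq:r1} gives
\begin{equation*}
\frac{d_L^+(-p_1)}{dt} = \min_{X_* \in C_t} 2\,\text{ric}_{g_t}(X_*,X_*) = 2 r^\pp_1 p_1 \leq -1.
\end{equation*}
Lemma \ref{lemma:Dini Dynamic} applied to $t \mapsto -p_1(t) + p_1(0) + t$ then yields $p_1(t) \geq p_1(0) + t$ on $[0,T)$.

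Next, I apply Lemma \ref{lemma:Dini} to the function $(t,(X,Y)) \mapsto \log\bigl(g_t(X,X) + g_t(Y,Y)\bigr)$ on the compact set $C = \{(X,Y) : X \in \pp,\ Y \in \ll,\ \|X\|_Q = \|Y\|_Q = 1\}$, whose supremum is $\log(p_m + l_n)(t)$. At any maximizer $(X_*,Y_*) \in C_t$ we have $g_t(X_*,X_*) = p_m$ and $g_t(Y_*,Y_*) = l_n$, with $X_*$ in the top $\pp$-eigenspace and $Y_*$ in the top $\ll$-eigenspace of $g_t$ with respect to $Q$, so $\text{ric}_{g_t}(X_*,X_*) = r^\pp_m p_m$ and $\text{ric}_{g_t}(Y_*,Y_*) = r^\ll_n l_n$. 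Plugging into the Ricci flow equation and invoking Lemma \ref{lemma:Ricci final estimate} together with the estimate of the previous step yields
\begin{equation*}
\frac{d_L^+ \log(p_m + l_n)}{dt} = \frac{-2(p_m r^\pp_m + l_n r^\ll_n)}{p_m + l_n} \leq \frac{1}{p_1(t)} \leq \frac{1}{t + p_1(0)}.
\end{equation*}
Setting $f(t) := \log(p_m + l_n)(t) - \log(p_m + l_n)(0) - \log(t + p_1(0)) + \log(p_1(0))$, one has $f(0) = 0$ and $\tfrac{d_L^+ f}{dt} \leq 0$, so Lemma \ref{lemma:Dini Dynamic} forces $f(t) \leq 0$; exponentiating produces the announced inequality.

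The main point that deserves care is the identification $\text{ric}_{g_t}(X_*, X_*) = r^\pp_m p_m$ at a maximizer (and its $\ll$-analogue). This relies on the $\Ad(H)$-equivariance of $\text{ric}_{g_t}$ together with the $\Ad(H)$-invariance of the top $Q$-eigenspace of $g_t$ inside $\pp$: after possibly refining the decomposition \eqref{eq:g diagonal}, that top eigenspace is a sum of irreducible $\mm_i$'s all sharing the eigenvalue $p_m$, and Schur's lemma forces $\text{ric}_{g_t}$ to act on it by the scalar $r^\pp_m$ relative to $g_t$. Beyond this detail, the argument is a routine chaining of Lemmas \ref{lemma:Dini}, \ref{lemma:Dini Dynamic}, and \ref{lemma:Ricci final estimate}, and I anticipate no further analytical obstacle.
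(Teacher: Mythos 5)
Your proof is correct and follows essentially the same route as the paper: first obtain $p_1(t)\geq t+p_1(0)$ from \eqref{eq:r1} via the Dini lemma, then bound $\tfrac{d_L^+}{dt}\log(p_m+l_n)$ using Lemma \ref{lemma:Ricci final estimate} and integrate. One small correction to the ``point deserving care'': Schur's lemma does not force $\text{ric}_{g_t}$ to act by a \emph{single} scalar on the whole top $Q$-eigenspace of $g_t|_{\pp}$ when that eigenspace is a sum of several (possibly equivalent) irreducibles --- the restriction of $\text{ric}_{g_t}$ there may have distinct eigenvalues or even off-diagonal blocks between equivalent summands. This does not actually matter: since Lemma \ref{lemma:Dini} gives $\tfrac{d_L^+\phi}{dt}=\min_{C_t}\partial_t g$, to obtain an upper bound it suffices to evaluate $\partial_t g$ at one convenient $(X_*,Y_*)\in C_t$, namely a pair of basis vectors $e_m\in\pp_m$, $e_n\in\ll_n$ from the simultaneous diagonalization \eqref{eq:g diagonal}, for which $\text{ric}_{g_t}(e_m,e_m)=r^{\pp}_m p_m$ and $\text{ric}_{g_t}(e_n,e_n)=r^{\ll}_n l_n$ hold by the definitions \eqref{eq: ricci L diagonal}--\eqref{eq:ricci P diagonal}; this is also what the paper's proof implicitly does.
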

\begin{proof}
 Let us get first the estimate for $p_1(t)$. Using Lemma \ref{lemma:Dini} and the estimate \eqref{eq:r1} we get that

\begin{align*}
\frac{d^-(-p_1)}{dt} \leq 2\ric_t(E^{\hat{1}},E^{\hat{1}})=2r^\pp_1 p_1 \stackrel{\eqref{eq:r1}}{\leq} -1,
\end{align*}
hence by Lemma \ref{lemma:Dini Dynamic}
\begin{equation}\label{eq:p1}
	t + p_1(0) \leq p_1(t)
\end{equation}

Moreover, using Lemma \ref{lemma:Ricci dichotomy} we get the following estimate

	\begin{align*}
	\frac{d^-\log(p_m +l_n)}{dt} \ \stackrel{\phantom{(}\ref{lemma:Dini}\phantom{)}}{\leq} & \  \frac{d}{dt}\log\left(g_t(E^{\hat{m}},E^{\hat{m}})+g_t(E^n,E^n)\right)= \frac{g'_t(E^{\hat{m}},E^{\hat{m}})+g'_t(E^n,E^n)}{g_t(E^{\hat{m}},E^{\hat{m}})+g_t(E^n,E^n)} \\
	\stackrel{\phantom{\eqref{eq:p1}}}{=}& \frac{-2r^\pp_mp_m  -2r^\ll_nl_n}{p_m+l_n} \stackrel{\eqref{eq:geometric inequality1}}{\leq}\frac{1}{p_1} \\
	\stackrel{\eqref{eq:p1}}{\leq} & \frac{1}{t+p_1(0)},
\end{align*}
hence $(p_m+l_n)(t) \leq (t+p_1(0))\frac{(p_m+l_n)(0)}{p_1(0)}$.
\end{proof}

We see then that the maximum eigenvalue of $g(t)$ with respect to $Q$ can grow at most as $O(t)$. In particular, $p_m(t) \leq (1+c_0)(t+p_1(0))$ for some given positive constant $c_0>0$ that only depends on the initial metric $g_0$.

Therefore, using \eqref{eq:p1} we get a pinching estimate along the flow for the metric $\left.g(t)\right|_{\pp\times \pp}$, namely
\begin{equation}\label{eq:pm/p1}
\frac{p_m(t)}{p_1(t)} \leq 1+c_0.
\end{equation}
We can then re-use this estimate to get the following proposition (for the sake of simplicity, from now on we will mostly omit that we are using the Lemmas \ref{lemma:Dini} and \ref{lemma:Dini Dynamic} above).

\begin{proposition}\label{prop:pinching}
	Let $(G/H,g(t))$, $t \in [0,T)$, be an awesome Ricci flow adapted to the Cartan decomposition $\gg = \kk \oplus \pp$. Let us consider the eigenvalues $p_1(t)$, $p_m(t)$, and $l_n(t)$ of $g_t$ with respect to $Q$. Then for all $t \geq t_0$ in the maximal interval of the Ricci Flow, there is an explicit constant $c_0>0$, which only depends on the initial conditions at $t_0$, such that 
	\begin{equation}\label{eq:p_m(t) at most p_1(t)+sublinear}
		p_m(t) \leq t+\left(p_1(t_0)-t_0\right) + c_0\sqrt{t+\left(p_1(t_0)-t_0\right)}\leq p_1(t) + c_0 \sqrt{p_1(t)}
	\end{equation}
and 
	\begin{equation}\label{eq:l(t) sublinear}
	l_n(t) \leq c_0\sqrt{(t-t_0+ p_1(t_0))}.
	\end{equation}
\end{proposition}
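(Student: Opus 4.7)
The plan is to exploit the dichotomy of Lemma \ref{lemma:Ricci dichotomy} to control jointly the excess $p_m(t) - p_1(t)$ over the linear lower bound on $p_1(t)$ from \eqref{eq:p1} and the largest $\ll$-eigenvalue $l_n(t)$, and then to bootstrap this joint control into a sharpened upper bound on $p_m(t)$ itself. Throughout, I set $s(t) \coloneqq t - t_0 + p_1(t_0)$; repeating the argument of \eqref{eq:p1} starting at $t_0$ gives $p_1(t) \geq s(t)$ on $[t_0, T)$.

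The core step is to introduce the auxiliary quantity
$$M(t) \coloneqq \max\bigl\{p_m(t) - p_1(t),\ l_n(t)\bigr\},$$
which can be written as a maximum of continuous functions of $g_t$ over a compact family of $Q$-unit vectors in $\pp$ and $\ll$, so Lemma \ref{lemma:Dini} applies. I would then prove
$$\frac{d_L^+ M}{dt}(t) \leq \frac{M(t)}{2\,p_1(t)} \leq \frac{M(t)}{2\,s(t)}.$$
In Case 1 of Lemma \ref{lemma:Ricci dichotomy} one has $M = p_m - p_1$, and combining the bound \eqref{eq:option1} on $r^\pp_m$ with $p'_1 \geq 1$ from \eqref{eq:r1} yields $(p_m - p_1)' \leq (p_m - p_1)/(2p_1)$; in Case 2 one has $M = l_n$, and \eqref{eq:option2} gives $l'_n \leq 0 \leq M/(2p_1)$. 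At the overlap $p_m - p_1 = l_n$ both estimates apply and are compatible with the single bound claimed for $M$. A Gronwall-type step via Lemma \ref{lemma:Dini Dynamic} applied to $M(t) - c_0 \sqrt{s(t)}$, with $c_0 \coloneqq M(t_0)/\sqrt{p_1(t_0)}$, then produces $M(t) \leq c_0 \sqrt{s(t)}$. This immediately yields \eqref{eq:l(t) sublinear} and, combined with $s \leq p_1$, the second inequality of \eqref{eq:p_m(t) at most p_1(t)+sublinear}.

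For the sharper first inequality of \eqref{eq:p_m(t) at most p_1(t)+sublinear}, the same dichotomy gives $p'_m(t) \leq 1 + M(t)/(2\,p_1(t))$ in both cases: in Case 1 from \eqref{eq:option1} using $p_m = p_1 + M$, and in Case 2 directly from \eqref{eq:option2} with $M = l_n$. Inserting the bound on $M$ just obtained produces $p'_m(t) \leq 1 + c_0/(2\sqrt{s(t)})$, which integrates explicitly (via the substitution $s(r) = r - t_0 + p_1(t_0)$) to $p_m(t) - p_m(t_0) \leq (t-t_0) + c_0(\sqrt{s(t)} - \sqrt{p_1(t_0)})$. Rearranging and using $c_0\sqrt{p_1(t_0)} \geq p_m(t_0) - p_1(t_0)$, which is guaranteed by the definition of $c_0$, absorbs the initial data and yields $p_m(t) \leq s(t) + c_0 \sqrt{s(t)}$.

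The main technical subtlety is the rigorous treatment of the upper Dini derivative of $M$ at times when $p_m - p_1 = l_n$, where the argmax set from Lemma \ref{lemma:Dini} mixes vectors from $\pp$ and $\ll$. This is precisely where the Dini formalism pays off: since the two cases of Lemma \ref{lemma:Ricci dichotomy} both apply at that overlap and produce upper bounds mutually compatible with $M/(2p_1)$, the estimate survives the transition between the two regimes, and the rest of the proof is a matter of routine Gronwall and integration.
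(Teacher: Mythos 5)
Your argument is correct, but it takes a genuinely different route from the paper. The paper's proof is an infinite bootstrap: starting from the linear bound $p_m(t)\leq(1+c_0)(t-t_0+1)$ of Lemma \ref{lemma:at most linear growth}, it feeds the $l_n$ bound into the $p_m$ estimate and vice versa, producing after $N$ iterations
\[
p_m(t)\leq\left(1+\tfrac{c_0}{2^N}\right)(t-t_0+1)+c_0\sum_{k=0}^{N-1}\frac{(\log(t-t_0+1))^k}{2^k k!},
\]
and similarly for $l_n$, so that the linear coefficient decays geometrically and the correction sums to $\exp(\tfrac{1}{2}\log(\cdot))=\sqrt{\cdot}$ in the limit $N\to\infty$. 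You instead introduce the single scalar $M(t)=\max\{p_m-p_1,\,l_n\}$, observe that the two cases of Lemma \ref{lemma:Ricci dichotomy} align exactly with which term realizes the max, prove the closed differential inequality $\frac{d_L^+ M}{dt}\leq\frac{M}{2p_1}\leq\frac{M}{2s}$, and close by a one-step Gronwall comparison ($h=M/\sqrt{s}$ is nonincreasing). This replaces an infinite iteration by one barrier argument, which is arguably cleaner; the computation for $M$ also sidesteps treating $p_1$ separately, since the Dini derivative of $p_m-p_1$ as a max over pairs handles the sign automatically. The tradeoff is that your constant $c_0=M(t_0)/\sqrt{p_1(t_0)}$ differs from the paper's $c_0=p_m(t_0)+l_n(t_0)-p_1(t_0)$, but both are explicit and depend only on the data at $t_0$, so the proposition is satisfied. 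One small care: the inequality $\frac{d_L^+ M}{dt}\leq\frac{M}{2p_1}$ at an overlap time ($p_m-p_1=l_n$) requires noting that both alternatives in Lemma \ref{lemma:Ricci dichotomy} hold there, so every element of the argmax set is controlled by the same bound and the $\min$ in Lemma \ref{lemma:Dini} is still $\leq\frac{M}{2p_1}$; you flag this correctly. Your recovery of the first inequality of \eqref{eq:p_m(t) at most p_1(t)+sublinear} by integrating $p_m'\leq 1+c_0/(2\sqrt{s})$ and absorbing the initial data with $c_0\sqrt{p_1(t_0)}\geq p_m(t_0)-p_1(t_0)$ is also sound.
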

\begin{proof}
By rescaling the initial metric $g(t_0)$ we may assume without loss of generality that $p_1(t_0) = 1$. Let $t \geq t_0$. We will show by induction that for all $N \in \mathbb{N}$,

	\begin{align*}
		p_m(t) &\leq \left(1+\frac{c_0}{2^N}\right)\left(t-t_0 +1\right)+c_0 \sum_{k=0}^{N-1}\frac{(\log \left(t-t_0 +1\right))^k}{2^k k!} 
	\end{align*}
	and
	
	\begin{align*}
		l_n(t) &\leq \frac{c_0}{2^N}\left(t-t_0 +1\right)+c_0 \sum_{k=0}^{N-1}\frac{(\log \left(t-t_0 +1\right))^k}{2^k k!},
	\end{align*}
	where $c_0 \coloneqq p_m(t_0) +l_n(t_0)-1$.
	
		By Lemma \ref{lemma:at most linear growth}, we have already seen that $p_m(t) \leq (1+c_0)\left(t-t_0 +1\right)$. Moreover, by the discussion following Lemma \ref{lemma:Ricci dichotomy} we get
		
	\begin{align*}
	\frac{d^-l_n(t)}{dt} \leq -2r_n(t)l_n(t) \leq \frac{1}{2}\left(\frac{p_m(t)}{p_1(t)}+\frac{p_1(t)}{p_m(t)} -2\right) \leq \frac{1}{2}\left(\frac{p_m(t)}{p_1(t)}-1\right) \stackrel{\eqref{eq:pm/p1}}{\leq} \frac{c_0}{2}.
\end{align*}
Hence, for $t \geq t_0$,

	\begin{equation} \label{eq:ln}
		l_n(t) \leq \frac{c_0}{2}\left(t-t_0+1\right)-\frac{c_0}{2}+l_n(t_0) \leq\frac{c_0}{2}\left(t-t_0+1\right)+c_0.
	\end{equation}

	Moreover, 
	
	\begin{align*}
		\frac{d^-p_m(t)}{dt} \leq -2r_m(t)p_m(t) \stackrel{\eqref{eq:rm}}{\leq} 1+\frac{l_n(t)}{2p_1(t)} \stackrel{\eqref{eq:ln}}{\leq}& \left(1+\frac{c_0}{2^2}\right)\frac{(t-t_0+1)}{p_1(t)}+\frac{c_0}{2p_1(t)}\\
		\stackrel{\phantom{\eqref{eq:ln}}}{\leq}& \left(1+\frac{c_0}{2^2}\right)+\frac{c_0}{2(t-t_0+1)}.
	\end{align*}
	Hence, for $t\geq t_0$,
	
	\begin{align*}
		p_m(t) \leq \ \left(1+\frac{c_0}{2^2}\right)(t-t_0+1) + \frac{c_0}{2}\log (t-t_0+1) + c_0.
	\end{align*}
	
	This establishes the basis of induction. Suppose that for $N \geq 2$ and $t \geq t_0$,
	
	\begin{align*}
		p_m(t) &\leq \left(1+\frac{c_0}{2^N}\right)(t-t_0+1)+c_0 \sum_{k=0}^{N-1}\frac{(\log (t-t_0+1))^k}{2^k k!} 
	\end{align*}
	and
	
	\begin{align*}
		l_n(t) &\leq \frac{c_0}{2^N}(t-t_0+1)+c_0 \sum_{k=0}^{N-1}\frac{(\log (t-t_0+1))^k}{2^k k!}.
	\end{align*}
	
	Then we can re-use this to get better estimates for $l_n(t)$ and $p_m(t)$. We have that
	
	\begin{align*}
		\frac{d^-l_n(t)}{dt} \leq \frac{1}{2}\left(\frac{p_m(t)}{p_1(t)}-1\right) \leq \frac{1}{2}\left(\frac{p_m(t)}{(t-t_0+1)}-1\right) \leq \frac{c_0}{2^{N+1}}+c_0 \sum_{k=0}^{N-1}\frac{(\log (t-t_0+1))^k}{(t-t_0+1)2^{k+1} k!} 
	\end{align*}
	hence
	
	\begin{align*}
		l_n(t) \leq \frac{c_0}{2^{N+1}}(t-t_0+1)+c_0 \sum_{k=0}^{N}\frac{(\log (t-t_0+1))^k}{2^{k} k!}, 
	\end{align*}
	and
	
	\begin{align*}
		\frac{d^-p_m(t)}{dt} \leq 1+\frac{l_n(t)}{2p_1(t)} \leq \left(1+\frac{c_0}{2^{N+1}}\right)+c_0 \sum_{k=0}^{N-1}\frac{(\log (t-t_0+1))^k}{(t-t_0+1)2^{k+1} k!} .
	\end{align*}
	Therefore,
	
	\begin{align*}
		p_m(t) \leq \ \left(1+\frac{c_0}{2^{N+1}}\right)(t-t_0+1) +c_0 \sum_{k=0}^{N}\frac{(\log (t-t_0+1))^k}{2^{k} k!}.
	\end{align*}
	
	Now this is valid for arbitrary $t \geq t_0$ in the maximal interval of the dynamics. Hence, taking the limit at $N \to \infty$ for the right-hand side, we get
	
	\begin{equation*}
		p_m(t) \leq (t-t_0+1) +c_0 \sqrt{(t-t_0+1)} \leq p_1(t) + c_0\sqrt{p_1(t)}
	\end{equation*}
and

\begin{equation*}
	l_n(t) \leq c_0 \sqrt{(t-t_0+p_1(t_0))}.
\end{equation*}
\end{proof}

The next corollary follows immediately from the estimate \eqref{eq:p_m(t) at most p_1(t)+sublinear} obtained in Proposition \ref{prop:pinching} above.

\begin{corollary}\label{corollary:p/p goes to 1}
Let $(G/H,g(t))$, $t \in [t_0,\infty)$, be an immortal awesome Ricci flow adapted to the Cartan decomposition $\gg = \kk \oplus \pp$. Then 
\[\lim_{t\to\infty}\frac{p_m(t)}{p_1(t)}=1.\]
\end{corollary}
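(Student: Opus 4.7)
The plan is to apply a squeeze argument directly to the estimate \eqref{eq:p_m(t) at most p_1(t)+sublinear} from Proposition \ref{prop:pinching}. Since $p_1(t)$ denotes the smallest and $p_m(t)$ the largest eigenvalue of $g(t)|_{\pp\times\pp}$ with respect to $Q$, we automatically have $p_m(t) \geq p_1(t)$, and therefore the ratio $p_m(t)/p_1(t)$ is bounded below by $1$ for all $t$ in the maximal interval of existence.

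For the matching upper bound, I would divide both sides of the second inequality in \eqref{eq:p_m(t) at most p_1(t)+sublinear}, namely $p_m(t) \leq p_1(t) + c_0 \sqrt{p_1(t)}$, by $p_1(t) > 0$ to get
\[
1 \leq \frac{p_m(t)}{p_1(t)} \leq 1 + \frac{c_0}{\sqrt{p_1(t)}}
\]
for all $t \geq t_0$. It therefore suffices to show that $p_1(t) \to \infty$ as $t \to \infty$, which is where the immortality hypothesis enters.

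For this last step I would invoke the linear lower bound \eqref{eq:p1}, which in the form suited to starting at $t_0$ reads $p_1(t) \geq (t - t_0) + p_1(t_0)$. Since the flow is immortal, we may let $t \to \infty$, which forces $p_1(t) \to \infty$ and hence $c_0/\sqrt{p_1(t)} \to 0$. The squeeze then gives $\lim_{t\to\infty} p_m(t)/p_1(t) = 1$, as desired.

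There is essentially no genuine obstacle here: all the analytical work has already been carried out in Proposition \ref{prop:pinching}, and the corollary amounts to an elementary pinching observation together with the linear lower bound on $p_1(t)$. The only point worth stating carefully is that the constant $c_0$ in \eqref{eq:p_m(t) at most p_1(t)+sublinear} depends solely on the initial data at $t_0$ and is in particular independent of $t$, so dividing by $\sqrt{p_1(t)}$ genuinely produces a quantity tending to zero.
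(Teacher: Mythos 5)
Your proof is correct and follows the same route the paper intends: divide the pinching estimate $p_m(t) \leq p_1(t) + c_0\sqrt{p_1(t)}$ from Proposition \ref{prop:pinching} by $p_1(t)$, then use the linear lower bound \eqref{eq:p1} together with immortality to conclude $p_1(t)\to\infty$ and squeeze the ratio to $1$. The paper states the corollary as an immediate consequence without writing out the argument, so your write-up simply supplies the details it leaves implicit.
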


\begin{remark*}
Observe that the background metric $Q$ given by $-B|_{\kk\times\kk} +B|_{\pp\times\pp}$ corresponds to a minimum for the moment map of the $SL(\gg,\mathbb{R})$-action of determinant-one matrices in the space of Lie brackets on $\gg$ (\cite[Proposition 8.1]{lau}). Thus, Corollary \ref{corollary:p/p goes to 1} reinforces the relation between geometric invariant theory and the geometry of $G/H$ by telling us that in the non-compact part $\pp$ the awesome metric under the Ricci flow approximates $-B|_{\pp\times\pp}$ up to scaling and $G$-equivariant isometry. In Section \ref{section:5} we will prove a convergence result in this direction (cf. Theorem \ref{theo:conv immortal}).
\end{remark*}

The next corollary of Proposition \ref{prop:pinching}, about the asymptotic behavior of the metric $g(t)$ restricted to the tangent space of $K/H$, will also be important for the long-time analysis of immortal awesome homogeneous Ricci flows in Section \ref{section:5}.

\begin{corollary}\label{corollary:g_s goes to 0}
Let $(G/H,g(t))$, $t \in [t_0,\infty)$, be an immortal awesome Ricci flow adapted to the Cartan decomposition $\gg = \kk \oplus \pp$. Then the rescaled metric $\tilde{g}_t \coloneqq t^{-1}g(t)$ is such that 
	\[\lim_{t\to \infty}\left.\tilde{g}_t\right|_{\ll\times \ll} =0.\]
\end{corollary}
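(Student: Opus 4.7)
The plan is to derive the corollary as a direct consequence of the sublinear growth estimate \eqref{eq:l(t) sublinear} for the largest $\ll$-eigenvalue, which was established in Proposition \ref{prop:pinching}. Since $\ll$ is finite-dimensional, convergence of a family of symmetric bilinear forms to zero is equivalent to all the eigenvalues (with respect to any fixed background inner product) tending to zero.

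Concretely, I would proceed as follows. First, recall that the restriction $g(t)|_{\ll\times\ll}$ admits the $Q$-orthogonal diagonalization \eqref{eq:g diagonal} with eigenvalues $0 < l_1(t) \leq \cdots \leq l_n(t)$. Hence the largest eigenvalue of the rescaled form $\tilde g_t|_{\ll\times\ll} = t^{-1} g(t)|_{\ll\times\ll}$ with respect to the background $Q|_{\ll\times\ll}$ is $t^{-1} l_n(t)$. By estimate \eqref{eq:l(t) sublinear} in Proposition \ref{prop:pinching}, there is a constant $c_0 > 0$ (depending only on the data at $t_0$) such that
\[
t^{-1} l_n(t) \;\leq\; c_0\,\frac{\sqrt{t - t_0 + p_1(t_0)}}{t} \;\xrightarrow[t\to\infty]{}\; 0.
\]
Since all the other eigenvalues $t^{-1} l_i(t)$ are nonnegative and dominated by $t^{-1} l_n(t)$, they all vanish in the limit, and therefore $\tilde g_t|_{\ll\times\ll} \to 0$ as a bilinear form on the finite-dimensional space $\ll$.

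There is essentially no obstacle here: the content is entirely in Proposition \ref{prop:pinching}. The only thing to be careful about is invoking the correct norm topology. One can either argue eigenvalue-by-eigenvalue as above, or equivalently note that the operator norm of the $Q$-self-adjoint endomorphism representing $\tilde g_t|_{\ll\times\ll}$ equals $t^{-1} l_n(t)$, which tends to zero. Both formulations are immediate from \eqref{eq:l(t) sublinear}.
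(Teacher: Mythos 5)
Your proposal is correct and follows exactly the paper's own argument: invoke the sublinear bound \eqref{eq:l(t) sublinear} for $l_n(t)$ from Proposition~\ref{prop:pinching} and observe that $t^{-1}l_n(t)\to 0$, which dominates all eigenvalues of $\tilde g_t|_{\ll\times\ll}$. The extra remark about the finite-dimensional norm topology is a harmless elaboration of what the paper leaves implicit.
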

\begin{proof}
	This follows immediately from the estimate \eqref{eq:l(t) sublinear} for $l_n(t)$, which shows that it may grow at most sublinearly. Hence, $\lim_{t\to\infty}\frac{l_n(t)}{t}=0$.\\
\end{proof}
We can now prove the first main result of this article.
\begin{theorem} \label{theo:main}
Let $(M ^d=G/H,g_0)$ be a semisimple homogeneous space such that the universal cover is not diffeomorphic to $\mathbb{R}^d$. If $g_0$ is an awesome metric, then the Ricci flow solution starting at $g_0$ has finite extinction time.
\end{theorem}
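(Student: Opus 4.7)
My strategy is to argue by contradiction: assume the flow is immortal, and use the topological hypothesis to exhibit a direction along which the Ricci curvature is uniformly positive in a scale-invariant way, forcing an eigenvalue of $g(t)$ to become negative in finite time.

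\emph{Topological input.} Because $G/K$ is a non-compact simply connected Riemannian symmetric space, it is diffeomorphic to $\mathbb{R}^{\dim\pp}$, and the associated fibration $K/H \hookrightarrow G/H \to G/K$ gives $\tilde M \cong \widetilde{K/H} \times \mathbb{R}^{\dim\pp}$ as a smooth manifold. Thus $\tilde M \not\cong \mathbb{R}^n$ forces $\widetilde{K/H}$ to be non-Euclidean; since a compact homogeneous manifold with Euclidean universal cover is a flat torus, this means $K/H$ is non-toral, i.e.\ $[\ll,\ll]_\mm \neq 0$. By the Remark following Lemma \ref{lemma:compact ricci curv bound} there is an index $n' \in I_\ll$, maximal with the property $[\ll_{n'},\ll]_\mm \neq 0$, such that $\sum_{j,k\in I_\ll}[n'jk]>0$.

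\emph{Scale-invariant Ricci bound.} Assume the flow is immortal. After rescaling we may take $p_1(0)=1$, and Proposition \ref{prop:pinching} yields $c_0, T_0$ with $p_1(t)\sim t$, $p_m(t)-p_1(t)\le c_0\sqrt{p_1(t)}$ and $l_n(t)\le c_0\sqrt{p_1(t)}$ for $t\ge T_0$. The maximality of $n'$ combined with the $S_3$-symmetry of the structure constants $[ijk]$ forces every triple $(n',j,k)$ with $[n'jk]\ne 0$ and $j,k\in I_\ll$ to satisfy $j,k\le n'$, so the inequality $l_{n'}^2-l_j^2-l_k^2+l_jl_k\ge 0$ used in the proof of Lemma \ref{lemma:compact ricci curv bound} still applies. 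Running that proof verbatim with $n$ replaced by $n'$ and multiplying through by $l_{n'}$ gives
\[
r^{\ll}_{n'}\, l_{n'} \ \geq\ \frac{1}{4d_{n'}}\sum_{j,k\in I_\ll}[n'jk]\ +\ \frac{1}{4d_{n'}}\sum_{\hat j,\hat k\in I_\pp}[n'\hat j\hat k]\ \frac{l_{n'}^2-(p_j-p_k)^2}{p_j p_k}.
\]
By the pinching, $|l_{n'}^2-(p_j-p_k)^2|\le 2c_0^2 p_1$ while $p_jp_k\ge p_1^2$, so the $\pp$-correction is $O(1/p_1)\to 0$. The $\ll$-term is a strictly positive constant, and therefore $r^{\ll}_{n'}(t)\,l_{n'}(t)\ge 2C>0$ for all $t$ beyond some $T_1$.

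\emph{Contradiction.} Since $\ll_{n'}$ is an irreducible $\Ad(H)$-module, Schur's lemma forces $g_t|_{\ll_{n'}}=l_{n'}(t)\,Q|_{\ll_{n'}}$ and $\ric_{g_t}|_{\ll_{n'}}=r^{\ll}_{n'}(t)\,g_t|_{\ll_{n'}}$, so $l_{n'}$ is a smooth positive function of $t$ satisfying $\tfrac{d}{dt}l_{n'}=-2r^{\ll}_{n'}l_{n'}\le -4C$ for $t\ge T_1$. Integrating, $l_{n'}$ becomes negative in finite time, which is impossible for the eigenvalue of a positive-definite inner product. Hence the flow cannot be immortal.

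\emph{Main obstacle.} The heart of the argument is the scale-invariant positivity in the second step. The compact-fiber contribution to $r^{\ll}_{n'}$ only dominates the non-compact correction because Proposition \ref{prop:pinching} provides the sharp $O(\sqrt{p_1})$ control on both $l_n$ and $p_m-p_1$; any weaker, merely $o(p_1)$, pinching would leave the $\pp$-term of the same order as the $\ll$-term, and the asymptotic sign would be ambiguous. This is also why one cannot mimic B\"ohm's compact argument of exhibiting a Ricci-positive direction at $t=0$: the Ricci-negative examples of Dotti--Leite--Miatello show the positive direction need not exist initially and only emerges asymptotically once the $\pp$-factor has expanded sufficiently relative to the compact fiber.
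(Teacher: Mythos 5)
Your argument has the right overall shape (track the Ricci curvature of a preferred $\ll$-direction, use Proposition \ref{prop:pinching} to kill the $\pp$-correction, and conclude that a positive eigenvalue of $g(t)$ must vanish in finite time), but there is a genuine gap in your topological input. You assert that $K/H$ non-toral is equivalent to $[\ll,\ll]_\mm\neq0$, and you only run the argument in that case. This is false: the correct statement (used in the paper) is that $K/H$ non-toral is equivalent to $[\kk,\kk]\not\subset\hh$, which decomposes as ``$[\ll,\ll]\not\subset\hh$ \emph{or} $[\hh,\ll]\neq0$.'' The second alternative can occur with $[\ll,\ll]_\mm = 0$. For instance, take $G=SL(2,\mathbb{C})$, $K=SU(2)$, and $H=U(1)$ a maximal torus of $K$; then $K/H=S^2$ is non-toral, $\ll$ is the irreducible $2$-dimensional root space, and $[\ll,\ll]\subset\mathfrak{t}=\hh$, so $\sum_{j,k\in I_\ll}[n'jk]=0$ identically. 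In this case the ``strictly positive $\ll$-term'' you isolate vanishes, your lower bound for $r^{\ll}_{n'}l_{n'}$ degenerates to a quantity tending to $0$, and no contradiction follows. The paper covers this missing subcase by observing that $[\hh,\ll_{n'}]\neq0$ forces the Casimir constant $c_{n'}>0$, hence $\sum_{j,k}[n'jk]=d_{n'}(1-2c_{n'})<d_{n'}$, and it is this strict deficit relative to $d_{n'}$ (together with the $\tfrac{1}{2l_{n'}}$ term in \eqref{eq:nikonorov ricci} and Corollary \ref{corollary:p/p goes to 1}) that supplies the needed uniform positivity. You need to add this second branch of the dichotomy to have a complete proof.

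Two secondary imprecisions: (i) your step ``$\tfrac{d}{dt}l_{n'}=-2r^{\ll}_{n'}l_{n'}$'' treats $l_{n'}(t)$ as a smooth eigenvalue attached to a fixed irreducible module, but when $\ll$ contains equivalent summands the $g(t)$-diagonalizing decomposition of $\mm$ can move with $t$ and the ordered eigenvalue $l_{n'}(t)$ is only Lipschitz; the paper avoids this by defining $L(t)$ as a maximum over the fixed $\hh$-submodules $V^{\perp_Q}\cup W^{\perp_Q}$ and invoking the Dini derivative Lemma \ref{lemma:Dini}. (ii) Because the decomposition can change, the constant $\sum_{j,k\in I_\ll}[n'jk]$ is not automatically $t$-independent; one needs the uniform $\lambda>0$ the paper extracts over all admissible irreducible summands. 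Neither of these is fatal on its own (both are standard fixes), but the missing case of the dichotomy is.
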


\begin{proof}
Let $\gg$ and $\hh$ be the Lie algebras of $G$ and $H$ respectively. Let us consider the Cartan decomposition $\gg = \kk \oplus \pp$ with $\hh \subset \kk$ and let us fix as background metric $Q \coloneqq -B|_{\ll\times \ll} +B|_{\pp\times \pp}$, where $B$ is the Killing form in $\gg$. We have then the canonical reductive identification $T_{eH}G/H \cong \mm = \ll \oplus \pp$, with $\ll := \hh^{\perp_Q}\cap\kk$, where $\hh$ is the Lie algebra of $H$. First, observe that as by hypothesis the universal cover of $M$ is not diffeomorphic to $\mathbb{R}^d$, then $K/H$ is not a torus, hence $[\kk,\kk] \not\subset \hh $. Moreover, since $[\kk,\kk] \perp_Q \zz(\kk)$, where $\zz(\kk)$ is the center of $\kk$, the condition $[\kk,\kk] \subset \hh$ is equivalent to $\ll \subset \zz(\kk)$, which in turn is equivalent to $[\ll,\ll] \subset \hh$ and $[\hh, \ll]=0$. 

So in terms of the irreducible representations decomposition \eqref{eq:g diagonal}, $[\kk,\kk] \not\subset \hh$ is equivalent to say that for at least one $i \in I_{\ll}$ either: $[\ll,\ll] \not\subset \hh$ and
\[\sum_{j,k \in I_{\ll}}[i jk] >0;\]
or $[\hh,\ll] \neq0$ and
\[\sum_{j,k}[i jk] \stackrel{\eqref{eq:d}}{=}d_i(1-2c_i)<d_i,\]
since then the Casimir operator $C_{\ll_i, \hh}$ given in $\eqref{eq:casimir}$ is not zero. 

Therefore, there is a constant $\lambda >0$ such that for any of the irreducible $\hh$-modules $\ll_i$ with $[\ll_i,\kk] \not\subset \hh$, either 
\[\sum_{j,k \in I_{\ll}}[i jk] \geq 2\lambda >0\]
or
\[\sum_{j,k}[i jk] \leq d_i-\lambda<d_i.\]

Now let $g_t$, $t \in [0,T)$, be the Ricci Flow \eqref{eq:ricci flow} solution starting at $g_0$. By Lemma \ref{lemma:awesome is RF invariant} we know that $g_t$ is an awesome homogeneous Ricci flow adapted to the Cartan decomposition $\gg = \kk \oplus \pp$, with $\kk = \hh\oplus\ll$. 

Let us consider the diagonalization of $g(t)$ with regard to $Q$ as in \eqref{eq:g diagonal},
\[	g_t = l_1(t)\cdot Q |_{\ll_1\times \ll_1} \perp \ldots \perp l_n(t)\cdot Q|_ {\ll_n\times \ll_n} \perp p_1(t)\cdot Q |_ {\pp_1\times \pp_1} \perp \ldots \perp p_m(t)\cdot Q|_ {\pp_m\times \pp_m}.\]
Let us denote the Ricci curvatures, as in \eqref{eq: ricci L diagonal} and \eqref{eq:ricci P diagonal}, by

\[r^{\ll}_i\cdot g(t)|_{\ll_i\times \ll_i}=\ric_{g(t)}|_{\ll_i\times \ll_i}, \hspace{1cm} \text{for} \hspace{0.2cm}  i=1,...,n \]
and 
\[r^{\pp}_i\cdot g(t)|_{\pp_i\times \pp_i}=\ric_{g(t)}|_{\pp_i\times \pp_i}, \hspace{1cm} \text{for} \hspace{0.2cm}  i=1,...,m. \]

Define the $\hh$-submodules $V = \{X \in W^{\perp_Q} \ | \ [\ll,X] \in \hh \}$ and  $W = \{X \in \ll \ | \ [\hh,X] =0 \}$, and let 
\[L(t) := \max\{g_t(X,X) \ | \ X \in V^{\perp_Q} \cup W^{\perp_Q} \ , \ \Vert X\Vert_Q=1 \}.\]
Note that $L(t)$ is the largest eigenvalue $l_{n'}(t)$ of $g(t)$ such that corresponding irreducible $\hh$-module $\ll_{n'}$, satisfies either $[\ll_n, \ll] \not \subset \hh$ or $[\hh,\ll_n] \neq 0$, which we already know it is a non-empty condition. Let us moreover consider $p_1(t) := \min\{g_t(X,X)  | X \in \pp, \Vert X\Vert_Q=1\}$ and $p_m(t) := \max\{g_t(X,X) \ | \ X \in \pp, \Vert X\Vert_Q=1\}$. Then using all the estimates we got so far and the fact that for any $j \in I_{\ll}$, if $\sum_{k \in I_\ll}[n'jk]\neq 0$, then $l_{n'} \geq l_j$, we get the following lower bound for $r^{\ll}_{n'}$,

	\begin{align*}
	r^{\ll}_{n'} \stackrel{\eqref{eq:nikonorov ricci}}{=}&  \frac{1}{2l_{n'}} + \frac{1}{4d_{n'}}\sum_{j,k}[n'jk]\left(\frac{l_{n'}}{x_k x_j}-\frac{x_j}{l_{n'} x_j}-\frac{x_j}{x_k l_{n'}}\right) \\
	\stackrel{\phantom{\eqref{eq:nikonorov ricci}}}{=}& \frac{d_{n'}}{2l_{n'}d_{n'}} + \frac{1}{4d_{n'}}\sum_{\hat{j},\hat{k} \in I_\pp}[n'\hat{j}\hat{k}]\left(\frac{l_{n'}}{p_k p_j}-\frac{p_i}{l_{n'} p_j}-\frac{p_j}{p_k l_{n'}}\right)\\
	&+\frac{1}{4d_{n'}} \sum_{j,k \in I_\ll}[n'jk]\left(\frac{l_{n'}}{l_k l_j}-\frac{l_k}{l_{n'} l_j}-\frac{l_j}{l_k l_{n'}}\right)\ \\
	\stackrel{\phantom{\eqref{eq:nikonorov ricci}}}{\geq}& \frac{d_{n'}}{2l_{n'}d_{n'}} + \frac{1}{4d_{n'}}\sum_{\hat{j},\hat{k} \in I_\pp}[n'\hat{j}\hat{k}]\left(-\frac{p_i}{l_{n'}p_j}-\frac{p_j}{p_k l_{n'}}\right)+ \frac{1}{4d_{n'}l_{n'}}\sum_{j,k \in I_\ll}[n'jk]\left(\frac{l_{n'}^2 -l_j^2-l_k^2}{l_kl_j}\right) \\
	\stackrel{\phantom{\eqref{eq:nikonorov ricci}}}{\geq}& \frac{d_{n'}}{2l_{n'}d_{n'}} -\frac{1}{2d_{n'}l_{n'}}\sum_{\hat{j},\hat{k} \in I_\pp}[n'\hat{j}\hat{k}]\frac{p_m}{ p_1}- \frac{1}{4d_{n'}l_{n'}}\sum_{j,k \in I_\ll}[n'jk].
\end{align*}
Therefore,

\begin{align*}
\frac{d^-l_{n'}(t)}{dt} \leq -2r^{\ll}_{n'}(t) l_{n'}(t)\leq \frac{1}{d_{n'}}\left(- d_{n'} +\frac{1}{2}\sum_{j,k \in I_\ll}[n'jk] + \sum_{\hat{j},\hat{k} \in I_\pp}[n'\hat{j}\hat{k}]\frac{p_m(t)}{ p_1(t)}\right).
\end{align*}

Let us assume by contradiction that $g(t)$ is immortal. Then by Corollary \ref{corollary:p/p goes to 1}, given $\epsilon >0$ we can assume that $t$ is big enough such that 
\[\frac{p_m(t)}{p_1(t)} \leq 1+\epsilon,\]
therefore we get that

\begin{align*}
\frac{d^-l_{n'}(t)}{dt} &\leq \frac{1}{d_{n'}}\left(- d_{n'} +\frac{1}{2}\sum_{j,k \in I_\ll}[n'jk] + \sum_{\hat{j},\hat{k} \in I_\pp}[n'\hat{j}\hat{k}] + \epsilon \sum_{\hat{j},\hat{k} \in I_\pp}[n'\hat{j}\hat{k}]\right) \\
&= \frac{1}{d_{n'}}\left((1+\epsilon)\sum_{j,k}[n'jk]- d_{n'} -\left(\frac{1}{2}+\epsilon\right)\sum_{j,k \in I_\ll}[n'jk]\right)
\end{align*}
and since either $[\hh,\ll_{n'}] \neq 0$  or $[\ll_n, \ll] \not \subset \hh$, there exists a constant independent of $t$, $\lambda >0$, such that either $\sum_{j,k}[n'jk] =d_{n'} - \lambda < d_{n'}$ or $\sum_{j,k \in I_\ll}[n'jk] \geq 2\lambda>0$. Therefore, if $d=\dim M$, then either 

\begin{align*}
\frac{d^-l_{n'}(t)}{dt}&= \frac{1}{d_{n'}}\left((1+\epsilon)\sum_{j,k}[n'jk]- d_{n'} -\left(\frac{1}{2}+\epsilon\right)\sum_{j,k \in I_\ll}[n'jk]\right) \\ 
& = \frac{1}{d_{n'}}\left((1+\epsilon)(d_{n'} -\lambda)-d_{n'}\right) \leq -\frac{\lambda}{d}+\epsilon
\end{align*}
or

\begin{align*}
\frac{d^-l_{n'}(t)}{dt} &= \frac{1}{d_{n'}}((1+\epsilon)\sum_{j,k}[n'jk]- d_{n'} -\left(\frac{1}{2}+\epsilon\right)\sum_{j,k \in I_\ll}[n'jk]) \\ 
& \leq \frac{1}{d_{n'}}((1+\epsilon)(d_{n'})-d_{n'}) -\frac{\lambda}{d}\leq -\frac{\lambda}{d}+\epsilon.
\end{align*}

Finally, choosing $\epsilon$ small enough we get that $ -\frac{\lambda}{d}+\epsilon <0$ and then for big enough $t$, we get that
\[l_{n'}(t) \leq \left(-\frac{\lambda}{d}+\epsilon\right)  t+l_{n'}(0),\]
which means that $l_{n'}(t)$ converges to zero in finite time and the Ricci flow is not immortal.\\
\end{proof}

\section{Convergence results for awesome metrics}\label{section:5}

In this section we will further our long-time behavior analysis of awesome Ricci flows by examining the long-time limit solitons we obtain by approprietaly rescaling the Ricci flow solution $g(t)$. We first investigate the case where the universal cover of $G/H$ is not contractible, i.e. it is not diffeomorphic to $\mathbb{R}^n$ so that by Theorem \ref{theo:main} the extinction time is finite and later in Subsection \ref{subsection:5.2}, the contractible case, when $G/H$ is diffeomorphic to $\mathbb{R}^n$, where the flow is immortal. 

\subsection{The Non-Contractible Case} \label{subsection:5.1}

Let us consider the following formula for the scalar curvature $R(g)$ of an awesome metric $g$ which easily follows from the Ricci curvature formula \eqref{eq:nikonorov ricci}
\begin{equation}\label{eq:scalar awesome}
	R(g)= \sum_{i \in I_{\ll}}\frac{d_i}{2l_i}-\sum_{\hat{j} \in I_{\pp}}\frac{d_{\hat{j}}}{2p_j}-\sum_{i,j,k}[ijk]\frac{x_i}{x_jx_k}.
\end{equation}
We immediately see from the equation above that
\begin{equation}\label{eq:R < 1/l}
	R(g) \leq \frac{\dim \ll}{2l_1},
\end{equation}
which in turn means that if the scalar curvature blows-up, then the smallest eigenvalue in the $\ll$-direction goes to zero.

B\"ohm showed in \cite[Theorem 2.1]{boe} that every homogeneous Ricci flow with finite extinction time develops Type I singularity, namely, that there is a constant $C(g_0)$ that only depends on the initial metric $g_0$ such that we have the following upper bound to the norm of the Riemann tensor $\Rm(g)$ along the Ricci flow
\[\Vert \Rm(g(t))\Vert _{g(t)}\leq \frac{C(g_0)}{T-t},\]
for $t \in[0,T)$, where $T$ is the maximal time for the flow. Even more, if we assume $R(g_0)$ is positive, he showed that along a finite extinction time homogeneous Ricci flow the Riemann tensor is controlled by the scalar curvature \cite[Remark 2.2]{boe} and that there are constants $c(g_0)$ and $C(g_0)$ only depending on the initial metric $g_0$ such that
\[\frac{c(g_0)}{T-t}\leq R(g(t)) \leq\frac{C(g_0)}{T-t},\]
for $t \in [0,T)$.

This gives us a natural scaling parameter for a blow-up analysis of the Ricci flow solution $g(t)$. By \cite{emt}, we can extract a non-flat limit shrinking soliton from such a blow-up. In the following theorem, which is our second main result, we show that these limits only depend on the induced geometry on the compact fiber $K/H$.

\begin{theorem}
	Let $M=G/H$ be a semisimple homogeneous space such that the universal cover is not contractible. Let $(M,g(s))$, $s \in [0,T)$, be an awesome Ricci Flow adapted to the Cartan decomposition $\gg = \kk \oplus \pp$.  Let $R(g)$ be the scalar curvature of the metric $g$. For any sequence $(s_a)_{a\in\mathbb{N}}$, $s_a \to T$ there exists a subsequence such that $\left(M,R(g(s_{\hat{a}}))\cdot g(s_{\hat{a}})\right)$ converges in pointed $C^{\infty}$-topology to the Riemannian product
	\[E_{\infty} \times \mathbb{E}^d,\]
	where $E_{\infty}$ is a compact homogeneous Einstein manifold with positive scalar curvature and $\mathbb{E}^d$ is the $d$-dimensional (flat) Euclidean space with $d \geq \dim \pp$.
	
	Furthermore, the geometry of $E_{\infty}$ just depends on the subsequence of Riemannian submanifolds $\left(K/H,R(g(s_{\hat{a}}))\cdot g(s_{\hat{a}})\right)$.
\end{theorem}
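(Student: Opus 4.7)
The plan is to combine the Type I behaviour of the singularity with the scale-invariant pinching of Sections \ref{section:3}--\ref{section:4} and the rigidity of homogeneous gradient shrinking Ricci solitons. By Theorem \ref{thm:A} the extinction time $T$ is finite, so B\"ohm's theorem \cite[Thm.~2.1, Rem.~2.2]{boe} yields two-sided bounds $c(g_0)/(T-s)\leq R(g(s))\leq C(g_0)/(T-s)$ together with a uniform curvature bound $\Vert\Rm(g(s))\Vert\leq C/(T-s)$. Hence the natural rescaling $\bar g_a \coloneqq R(g(s_a))\cdot g(s_a)$ has uniformly bounded curvature, and homogeneity provides a uniform positive lower bound on the injectivity radius. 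By the Enders-M\"uller-Topping compactness theorem \cite{emt}, after passing to a subsequence, $(M,\bar g_{\hat a})$ converges in pointed $C^\infty$ Cheeger-Hamilton topology to a non-flat complete gradient shrinking Ricci soliton $(M_\infty,g_\infty)$. Homogeneity passes to the limit, and by the rigidity theorem for homogeneous gradient shrinking Ricci solitons (Naber; Petersen-Wylie), $g_\infty$ is isometric to a Riemannian product $E_\infty\times\mathbb{E}^d$ with $E_\infty$ a compact homogeneous Einstein manifold of positive scalar curvature.

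To locate the Euclidean factor, I would use the pinching from Sections \ref{section:3}--\ref{section:4}. The bound \eqref{eq:R < 1/l} together with $R(g(s))\to\infty$ forces $l_1(s)\to 0$, while \eqref{eq:p1} gives $p_j(s)\geq p_1(s)\geq s+p_1(0)>0$, so every $\pp$-eigenvalue remains bounded away from zero on $[0,T)$. Consequently $R(g(s))\cdot p_j(s)\to\infty$ for every $j\in I_\pp$. The estimates \eqref{eq:r1}--\eqref{eq:rm} and Lemma \ref{lemma:Ricci dichotomy} bound $|r^\pp_j|$ uniformly in $1/p_1$ and $l_n/(p_1 p_m)$, which together with Proposition \ref{prop:pinching} gives $r^\pp_j/R\to 0$, i.e.\ the $\pp$-directions become Ricci-flat in the rescaled geometry. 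Combined with the product splitting of $g_\infty$ above, this forces the $\pp$-block to be absorbed into the Euclidean factor, yielding $d\geq\dim\pp$.

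For the last statement I would exploit the awesome decomposition of the Ricci tensor on $\ll$ given in \eqref{eq:ricci tensor of in terms of the induced metric}:
\[
 \ric_g(X,X) = \ric_{K/H}(X,X) + \mathcal{C}_\pp(X,X), \qquad X\in\ll,
\]
where the correction $\mathcal{C}_\pp$ is algebraic in the brackets of $\kk$ with $\pp$ and, after expansion in the adapted $Q$-orthonormal basis, is uniformly bounded by a constant depending only on the bracket structure and on the (pinched) ratio $p_m/p_1\leq 1+c_0$. Dividing by $R\to\infty$ kills $\mathcal{C}_\pp/R$, so the leading contribution to the rescaled Ricci tensor in the $\ll$-directions is $\ric_{K/H}/R$. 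Hence the rescaled Ricci evolution on the compact fiber $K/H$ governs the compact factor $E_\infty$, which is therefore determined by the subsequential $C^\infty$-limit of the Riemannian submanifolds $(K/H, R(g(s_{\hat a}))\cdot g(s_{\hat a}))$.

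The main obstacle I expect is to promote the algebraic flattening of the $\pp$-directions and the domination of $\ric_{K/H}$ over $\mathcal{C}_\pp$ into a genuine metric splitting of the Cheeger-Hamilton limit: the pointwise scaling of eigenvalues does not by itself produce a Riemannian product decomposition. One must either invoke the Naber/Petersen-Wylie rigidity combined with a de Rham-type argument, or explicitly construct the Euclidean factor using the Killing fields coming from $\pp$. Tracking how these $\pp$-Killing fields act in the limit should identify the $\mathbb{E}^d$-factor associated with $\pp$, and a separate verification using the intrinsic Ricci flow on $K/H$ should then guarantee that $E_\infty$ is genuinely fixed by the $K/H$-data alone.
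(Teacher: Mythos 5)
Your proposal correctly identifies the overall architecture — finite extinction time via Theorem \ref{thm:A}, Type I behavior via B\"ohm, subconvergence to a non-flat homogeneous gradient shrinking soliton via Enders--M\"uller--Topping and Hamilton compactness, and the Petersen--Wylie rigidity giving the product splitting $E_\infty\times\mathbb{E}^d$. This matches the paper.

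However, the mechanism you propose for showing $d\geq\dim\pp$ has a genuine gap, and the paper's actual argument is essentially the opposite of yours. You claim that the estimates \eqref{eq:r1}--\eqref{eq:rm} together with Lemma \ref{lemma:Ricci dichotomy} bound $|r^\pp_j|$ uniformly and that, combined with the pinching from Proposition \ref{prop:pinching}, this gives $r^\pp_j/R\to 0$. This does not work: \eqref{eq:r1} gives only an upper bound for $r^\pp_1$, \eqref{eq:rm} only a lower bound for $r^\pp_m$, and neither controls intermediate indices. More importantly, the middle bracket term in $r^\pp_j$ has the shape $[\hat{\jmath}\hat{h}k]\left(\frac{p_j}{p_h}-\frac{p_h}{p_j}\right)\frac{1}{l_k}$; since $R\leq\frac{\dim\ll}{2l_1}$ forces $l_1\to 0$, this term can blow up at the same rate as $R$, so dividing by $R$ does not a priori kill it. Nor can you appeal to $p_m/p_1\to 1$: Corollary \ref{corollary:p/p goes to 1} is only for the \emph{immortal} case, because it requires $p_1(t)\to\infty$. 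In the finite extinction time case $p_1$ is merely bounded, so \eqref{eq:pm/p1} gives only $p_m/p_1\leq 1+c_0$, a bounded but non-degenerating ratio. The paper avoids this entirely by running the logic the other way: since the Petersen--Wylie limit is a gradient shrinker and hence has $\mathrm{Ric}\geq 0$, and since $r^\pp_1<0$ pointwise, the rescaled eigenvalue $r^\pp_1/R$ is squeezed to $0$. This forces the bracket quantities $[\hat{1}\hat{\jmath}k]\bigl(\tfrac{p_1}{p_j}-\tfrac{p_j}{p_1}\bigr)/(Rl_k)\to 0$ (they all have the same sign), which in turn shows that the only potentially positive contribution to $r^\pp_2/R$ vanishes in the limit, giving $r^\pp_2/R\to 0$, and so on recursively through all of $I_\pp$. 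The Ricci-nonnegativity of the limit soliton is not a consequence you derive from the eigenvalue estimates; it is the essential \emph{input} that makes the eigenvalue estimates usable.

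The same missing ingredient undermines the last step. You assert that $\mathcal{C}_\pp$ is uniformly bounded in terms of the bracket structure and the pinched ratio $p_m/p_1$, and that dividing by $R$ kills it. But after $\tilde{g}_s$-normalizing the $\ll$-direction $X$, the relevant quantity involves a factor $1/(Rl_k)$, and again $l_k\to 0$ for some $k$ can offset $R\to\infty$. What actually makes the difference $|\ric_{\tilde g_s}(\tilde X_s,\tilde X_s)-\ric_{N_s}(\tilde X_s,\tilde X_s)|$ go to zero is precisely the recursively derived vanishing \eqref{eq:ricp IC to 0}, packaged in the paper as \eqref{eq:a-b to 0}, together with the comparatively easy fact that $l_k/(R p_i p_j)\to 0$. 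So both the lower bound $d\geq\dim\pp$ and the reduction of $E_\infty$ to the $K/H$-data hinge on the recursive Ricci-nonnegativity argument which your proposal does not supply.

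Finally, a small but important omission: the paper cites \cite[Theorem 5.2]{boe} to conclude that the dimension $d$ of the Euclidean factor is independent of the subsequence, which you do not address.
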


\begin{proof}
	By Theorem \ref{theo:main} we know that such a solution $g(s)$ has finite time extinction, hence by \cite[Theorem 2.1]{boe} we know that it is a Type I flow. By \cite[Theorem 1.1]{laf} we can moreover assume without loss of generality that $R(g(0))>0$, so let us define the parabolic rescaled metric $\tilde{g}_s\coloneqq R(g(s))\cdot g(s)$. By the work of Enders, Müller, and Topping on Type I singularities of the Ricci flow \cite[Theorem 1.1]{emt}, it follows via Hamilton's compactness theorem \cite{ham} that along any sequence of times converging
	to the singularity time $T$, these scalar curvature normalized parabolic rescalings will subconverge to a non-flat homogeneous gradient shrinking soliton.
	
	Moreover, by work of Petersen and Wylie in \cite{pw}, we have that a homogeneous gradient shrinking soliton is rigid, in the sense that it is a Riemannian product of an Euclidean factor and a positive scalar curvature homogeneous Einstein manifold. 
	
	We have already seen in Equation \eqref{eq:r1} that for the direction of the smallest eigenvalue in the $\pp$-direction $p_1$, the Ricci curvautre is negative, $r^{\pp}_1<0$. Since any limit gradient shrinking soliton is Ricci non-negative, this implies that $\lim_{s\to T}\left(\frac{1}{R(g(s))}\cdot r_1^{\pp}(s)\right) = 0.$
	In particular, for any $\hat{j} \in I_{\pp}$ and $k \in I_{\ll}$,
	\[	\lim_{s\to T}\left([\hat{1}\hat{j}k]_s\left(\frac{p_{1}(s)}{p_j(s)}-\frac{p_j(s)}{p_{1}(s)}\right)\frac{1}{R(g(s))l_k(s)}\right)= 0.\]
	
	Which in turn implies that for the second largest eigenvalue in the $\pp$-direction $p_2$, $\lim r^{\pp}_2(s)\leq0$ as $t$ approaches the singular time $T$. Hence, once again we can conclude that for any $\hat{j} \in I_{\pp}$ and $k \in I_{\ll}$,
		\[	\lim_{s\to T}\left([\hat{2}\hat{j}k]_s\left(\frac{p_{2}(s)}{p_j(s)}-\frac{p_j(s)}{p_2(s)}\right)\frac{1}{R(g(s))l_k(s)}\right)= 0.\]
		
		Arguing recursively, we can then conclude that for any $\hat{i},\hat{j} \in I_{\pp}$ and $k \in I_{\ll}$, 
		
		\begin{equation}\label{eq:ricp IC to 0} 
	\lim_{s\to T}\left([\hat{i}\hat{j}k]_s\left(\frac{p_{i}(s)}{p_j(s)}-\frac{p_j(s)}{p_{i}(s)}\right)\frac{1}{R(g(s))l_k(s)}\right)= 0.
		\end{equation}
	Therefore, in the limit geometry we find at least as many linear independent directions as $\dim\pp$ such that the Ricci curvature is $0$. This can only be the case if the Euclidean factor in the limit has dimension at least  as large as $\dim \pp$. Furthermore, by \cite[Theorem 5.2]{boe} we know that this dimension does not depend on the subsequence taken.
	
	Observe that this in particular implies 
	\begin{align}\label{eq:a-b to 0}
\sum_{\substack{ \alpha,\beta,\gamma}}\left(\tilde{g}_s\left(\tilde{E}^{\pp_i}_{\alpha},\left[\tilde{E}^{\pp_j}_{\beta},\tilde{E}^{\ll_k}_{\gamma}\right]\right)
- Q\left(E^{\pp_i}_{\alpha},\left[E^{\pp_j}_{\beta},\tilde{E}^{\ll_k}_{\gamma}\right]\right)\right) \to 0, 
	\end{align}
where $\tilde{E}^{\pp_j}_\alpha=\frac{E^{\pp_j}_\alpha}{\sqrt{R(g(s))\cdot p_j(s)}}$ (respectively $\tilde{E}^{\ll_k}_\gamma=\frac{E^{\ll_k}_\gamma}{\sqrt{R(g(s))\cdot l_i(s)}}$) and $\{E^{\pp_j}_\alpha\}$ (respectively $\{E^{\ll_i}_\gamma\}$) is a $g(s)$-diagonalizing basis of $\pp$ (respectively of $\ll$) with respect to the $\Ad(K)$-invariant background metric $Q\coloneqq -B|_{\ll\times \ll} +B|_{\pp\times \pp}$.

Furthermore, let $X \in \ll$ and $x_s \coloneqq || X||^2_{\tilde{g}_s}$. The rescaled Ricci curvature on the compact Riemannian submanifold $N_s=(K/H,\tilde{g}_s)$ on the direction $\tilde{X}_s = \frac{X}{\sqrt{x_s}}$, as remarked in equation \eqref{eq:ricci tensor of in terms of the induced metric}, is given by

\begin{align*}
	\ric_{\tilde{g}_s}\left(\tilde{X}_s,\tilde{X}_s\right)=&\ \ric_{N_s}\left(\tilde{X}_s,\tilde{X}_s\right)
	-\frac{1}{2}\tr\left(\left.\text{ad}(\tilde{X}_s)\circ \text{ad}(\tilde{X}_s)\right|_{\pp}\right)\\
	&-\frac{1}{2}
	\sum_{\substack{\hat{i} \in I_{\pp} \\ \alpha}} \left \Vert\left[\tilde{X}_s,\tilde{E}^{\pp_i}_\alpha\right]\right \Vert_{\tilde{g}_s}^2+\frac{1}{4}
	\sum_{\substack{\hat{i},\hat{j} \in I_{\pp} \\ \alpha,\beta}} \tilde{g}_s\left([\tilde{E}^{\pp_i}_\alpha,\tilde{E}^{\pp_j}_\beta],\tilde{X}_s\right)^2.
\end{align*}

This means that

\begin{align*}
	\left|\ric_{\tilde{g}_s}(\tilde{X}_s,\tilde{X}_s)-\ric_{N_s}(\tilde{X}_s,\tilde{X}_s)\right| \leq&\ \frac{1}{2}\left(	\sum_{\substack{\hat{i},\hat{j} \in I_{\pp} \\ \alpha,\beta}}Q\left( E^{\pp_i}_\alpha, \left[E^{\pp_j}_\beta,\tilde{X}_s\right]\right)^2 
	-\sum_{\substack{\hat{i},\hat{j} \in I_{\pp} \\ \alpha,\beta}} \tilde{g}_s\left( \tilde{E}^{\pp_i}_\alpha, \left[\tilde{E}^{\pp_j}_\beta,\tilde{X}_s\right]\right)^2\right) \\  
	&+\frac{1}{4} \sum_{\substack{\hat{j},\hat{k} \in I_{\pp} \\ \alpha,\beta}}g_s\left( \left[\tilde{E}^{\pp_j}_\beta,\tilde{E}^{\pp_k}_\alpha\right],\tilde{X}_s\right)^2.
\end{align*}

For the first term on the right-hand side we have that

\begin{align*}
	 \tilde{g}_s\left( \tilde{E}^{\pp_i}_\alpha, \left[\tilde{E}^{\pp_j}_\beta,\tilde{X}_s\right]\right)^2 =\left(\sum_{\substack{k\in I_{\ll} \\ \gamma}}\tilde{g}_s\left(\tilde{X}_s,\tilde{E}^{\ll_k}_{\gamma}\right)\tilde{g}_s\left(\tilde{E}^{\pp_i}_{\alpha},\left[\tilde{E}^{\pp_j}_{\beta},\tilde{E}^{\ll_k}_{\gamma}\right]\right)\right)^2
\end{align*} 
and by what we observed in \eqref{eq:a-b to 0} this can be approximated by 
\[\left(\sum_{\substack{k\in I_{\ll} \\ \gamma}}\tilde{g}_s\left(\tilde{X}_s,\tilde{E}^{\ll_k}_{\gamma}\right)\tilde{g}_s\left(\tilde{E}^{\pp_i}_{\alpha},\left[\tilde{E}^{\pp_j}_{\beta},\tilde{E}^{\ll_k}_{\gamma}\right]\right)\right)^2
=\left(\sum_{\substack{k\in I_{\ll} \\ \gamma}}\tilde{g}_s\left(\tilde{X}_s,\tilde{E}^{\ll_k}_{\gamma}\right)Q\left(E^{\pp_i}_{\alpha},\left[E^{\pp_j}_{\beta},\tilde{E}^{\ll_k}_{\gamma}\right]\right)\right)^2+ \epsilon(s),
 \]
 where $\epsilon(s)=0$ as $s\to T$.

As for the second term on the right-hand side, observe that, by Lemma \ref{lemma:at most linear growth}, $l_n(s)$ grows at most linearly and $p_1(s)$ grows at least linearly, which implies that for any $\hat{i},\hat{j} \in I_\pp$ and $k\in I_\ll$,
\[\lim_{t\to T}\left(\frac{1}{R(g(s))}\cdot\frac{l_k(s)}{p_i(s)p_j(s)}\right) =0.\]
Hence, by Cauchy-Schwarz we get that 

\begin{align*}
	\tilde{g}_s\left( \left[\tilde{E}^{\pp_i}_\alpha,\tilde{E}^{\pp_j}_\beta\right],\tilde{X}_s\right)^2 &\leq  \left\Vert\left[\tilde{E}^{\pp_i}_\alpha,\tilde{E}^{\pp_j}_\beta\right] \right\Vert_{\tilde{g}_s}^2 =\sum_{\substack{k \in I_{\ll} \\ \gamma }} Q\left( \left[E^{\pp_i}_\alpha,E^{\pp_j}_\beta\right],E^{\ll_k}_{\gamma}\right)^2\cdot \frac{1}{R(g(s))}\cdot\frac{l_k(s)}{p_i(s)p_j(s)} \to 0.
\end{align*}

Therefore $\left|\ric_{\tilde{g}_s}(\tilde{X}_s,\tilde{X}_s)-\ric_{N_s}(\tilde{X}_s,\tilde{X}_s)\right| \to 0$ and we conclude that the Ricci $(1,1)$-tensor restricted to $\ll$ approximates the Ricci tensor given by the induced metric on $(K/H,\tilde{g}_s)$ as $s$ approaches the singularity time $T$.

Let us now consider the rescaled Ricci flow solution 
\[\tilde{g}_s(t):=R(g(s))\cdot g\left(s+\frac{t}{R(g(s))}\right)\]
restricted to the submanifold $K/H$. The argument above could be carried on taking by $\tilde{g}_s(t)$ instead of $\tilde{g}_s(0)$. So we have shown that the family of Ricci flow solutions $(K/H,\tilde{g}_s(t))$ is equivalent, as $s$ approaches the singularity time $T$, to the Ricci flow solution $(K/H,\hat{g}_s(t))$, where $\hat{g}_s(t)$ is the Ricci flow on $K/H$ with initial metric $\tilde{g}_s(0)|_{K/H}$. In particular, that means that the limit Einstein factor $E_\infty$ only depends on a convergent subsequence of the submanifold geometry of $\left(K/H,\tilde{g}_s(0)|_{K/H}\right)$. \\
\end{proof}

\subsection{The Contractible Case} \label{subsection:5.2}

For the sake of completeness, we want now to understand the limit geometry in the case when the universal cover of our semisimple homogeneous space is contractible.

A homogeneous Riemannian manifold $\tilde{M}$ diffeomorphic to $\mathbb{R}^n$ must be a Riemmanian product of a non-compact symmetric space and an $\mathbb{R}^d$-bundle over a \textit{Hermitian symmetric space} (see \cite[Proposition 3.1]{bl22}). Hermitian symmetric spaces are a special class of non-compact symmetric spaces which are also Hermitian manifolds. Irreducible ones correspond to irreducible symmetric pairs $(\gg,\kk)$ with $\dim \zz(\kk)=1$. In particular, if we write $\kk=\kk_{ss}\oplus\zz(\kk)$ and consider the integral subgroup $K_{ss}\subset G$ with Lie algebra $\kk_{ss}$, then the homogeneous space $G/K_{ss}$ is a homogeneous line bundle over the irreducible Hermitian symmetric space $G/K$ (for more on Hermitian symmetric spaces cf. \cite[Chapter VIII, Theorem 6.1]{hel}, see also \cite[7.104]{be}).

An example is the product $\reallywidetilde{\left(SL(2,\mathbb{R})\right)^k}$ with left-invariant metrics, which is an $\mathbb{R}^k$-bundle over the product of hyperbolic planes $\left(\mathbb{H}^{2}\right)^k$. It is worth mentioning that semisimple homogeneous $\mathbb{R}$-bundles over irreducible Hermitian symmetric spaces only admit awesome homogeneous metrics (see \cite[Remark 3.2]{bl22}), which is the case for example of $\reallywidetilde{SL(2,\mathbb{R})}$.

Let $\gg$ be semisimple of non-compact type with Cartan decomposition $\gg = \kk \oplus \pp$ and $\hh$ a compactly embededd subalgebra of $\kk$. Let $G$, $K$ and $H$ be Lie groups for the Lie algebras $\gg$, $\kk$, and $\hh$ respectively such that $G/H$ is simply-connected. We know that $G/H = K/H \times \mathbb{R}^{\dim \pp}$ is diffeomorphic to $\mathbb{R}^n$ if and only if $[\kk,\kk] \subset \hh$. Given the reductive decomposition $\mm = \ll \oplus \pp $, where $\gg = \hh \oplus \mm$, this is equivalent to 
\[[\hh,\ll] =0 \hspace{0.5cm} \text{and} \hspace{0.5cm} [\ll,\ll] \subset \hh.\]

This implies that for any $G$-invariant metric $g$ on $G/H$, $(K/H, \left.g\right|_{K/H})$ is isometric to a flat Euclidean space. If $g$ is awesome, then by Remark \ref{remark:toral case} this implies that
\[r^{\ll}_i = \frac{1}{4d_i}\sum_{\hat{j},\hat{k} \in I_{\pp}}[i\hat{j}\hat{k}]\left(\frac{2}{l_i} + \frac{l_i}{p_j p_k}-\frac{p_j}{l_i p_k}-\frac{p_k}{l_ip_j}\right).\]

We already know, by Corollary \ref{corollary:p/p goes to 1}, that on an immortal awesome Ricci flow, $\left.g(t)\right|_{\pp \times \pp}$ approximates $t\cdot \left.B\right|_{\pp \times \pp}$ and, by Corollary \ref{corollary:g_s goes to 0}, that the blow-down of $g(t)$ in the $\ll$-direction goes to 0. These estimates are enough to have the following convergence result, which is the third main result of this article.

\begin{theorem}\label{theo:conv immortal}
		Let $\tilde{M}=G/H$ be a contractible semisimple homogeneous space. Let $(\tilde{M},g(s))$, $s \in [1,\infty)$, be an immortal awesome Ricci Flow adapted to the Cartan decomposition $\gg = \kk \oplus \pp$, with $\kk = \hh\oplus\ll$. Then the parabolic rescaling $(\tilde{M},s^{-1}g(s))$ converges in pointed $C^{\infty}$-topology to the Riemannian product
	\[ \Sigma_{\infty}\times \mathbb{E}^{\dim \ll},\]
	where $\Sigma_{\infty}$ is the non-compact Einstein symmetric space $\left(G/K,\left.B\right|_{\pp\times\pp}\right)$ and $\mathbb{E}^d$ is the $d$-dimensional (flat) Euclidean space.
\end{theorem}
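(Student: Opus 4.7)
The plan is to extract a pointed $C^\infty$-limit of the parabolic rescaling $\tilde{g}_s = s^{-1}g(s)$ via Hamilton's compactness and to identify this limit by combining the eigenvalue asymptotics from Section \ref{section:4} with the simplified bracket structure forced by the contractibility hypothesis.

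The first step is to sharpen Corollaries \ref{corollary:p/p goes to 1} and \ref{corollary:g_s goes to 0}: combining the lower bound $p_1(s)\geq s+O(1)$ from \eqref{eq:p1} with the upper bound $p_m(s)\leq s+O(\sqrt{s})$ from \eqref{eq:p_m(t) at most p_1(t)+sublinear} and the pinching $p_m/p_1\to 1$, we get $p_j(s)/s\to 1$ for every $j\in I_\pp$, while $l_i(s)/s = O(1/\sqrt{s})\to 0$ for every $i\in I_\ll$. Thus the $\pp$-part of $\tilde{g}_s$ converges to $\left.Q\right|_{\pp\times\pp}=\left.B\right|_{\pp\times\pp}$ and the $\ll$-part tends to zero as bilinear forms. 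Next, the hypothesis $[\kk,\kk]\subset\hh$, equivalently $[\ll,\ll]\subset\hh$ and $[\hh,\ll]=0$, translates into the vanishing of most bracket coefficients: only $[\hat{i}\hat{j}k]$ with $\hat{i},\hat{j}\in I_\pp$, $k\in I_\ll$ survive when mixing $\ll$- and $\pp$-indices, and all coefficients with three $\pp$- or three $\ll$-indices vanish. Substituting into Nikonorov's formula \eqref{eq:nikonorov ricci} and using the asymptotics above leads to $s\,r^\pp_{\hat{i}}\to -\tfrac{1}{2}$ and $s\,r^\ll_i\to 0$ along the flow, identifying the expected Ricci tensor of the limit as $-\tfrac{1}{2}\left.B\right|_{\pp\times\pp}\oplus 0|_{\ll\times\ll}$, matching the expanding Ricci soliton product $\Sigma_\infty\times\mathbb{E}^{\dim\ll}$.

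To upgrade these asymptotics to pointed $C^\infty$-convergence, I would construct diffeomorphisms $\Phi_s\colon\Sigma_\infty\times\mathbb{R}^{\dim\ll}\to G/H$ by combining a section of the fibration $G/H\to G/K$ with a component-wise rescaling of the $\ll$-coordinates by $\sqrt{s/l_i(s)}$, which renormalizes the shrinking flat induced metric on $K/H$ to the standard Euclidean metric. The curvature bounds from the previous step, together with the positive injectivity radius of $\tilde{M}$ (simply connected with bounded curvature), provide bounded geometry around a fixed basepoint, so Hamilton's compactness theorem yields smooth convergence of $\Phi_s^*\tilde{g}_s$ on compact sets to $\left.B\right|_{\pp\times\pp}\oplus g_{\mathrm{Eucl}}$, completing the proof.

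The main obstacle is the rigorous verification of $s\,r^\pp_{\hat{i}}\to -\tfrac{1}{2}$. The dangerous error terms in the Ricci sum are of order $(p_i(s)-p_j(s))/l_k(s)$, for which both numerator and denominator are a priori only $O(\sqrt{s})$, so this ratio is only bounded in general, not obviously vanishing. A finer argument, likely through a Lyapunov-type estimate on $\sum_{i,j}(p_i-p_j)^2$ driven by the flow and exploiting the equalizing effect of the Ricci flow on the $\pp$-eigenvalues as the geometry approaches the symmetric space, will be needed to close this gap.
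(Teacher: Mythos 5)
Your estimate-gathering (the $p_j/s\to1$, $l_i/s\to0$ asymptotics and the bracket vanishing from $[\kk,\kk]\subset\hh$) is correct and matches the paper, but your proposed identification strategy has a gap that the paper avoids by a different route. You want to extract a limit via compactness and then identify it by establishing $s\,r^\pp_{\hat{i}}\to -\tfrac12$, and you rightly flag that the terms $\bigl(p_i/p_j-p_j/p_i\bigr)/l_k$ are the obstruction: using only the available estimates $p_m-p_1=O(\sqrt{s})$ and $l_k=O(\sqrt{s})$ one finds $s\bigl(p_i/p_j-p_j/p_i\bigr)/l_k=O(\sqrt{s}/l_k)$, which is a priori unbounded if some $l_k$ stays bounded, and your suggested ``Lyapunov-type estimate on $\sum(p_i-p_j)^2$'' is not developed and is not available from Section~\ref{section:4}. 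As stated, the curvature identification does not close, so the proof is incomplete.

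The paper sidesteps the curvature asymptotics altogether. It defines explicit diffeomorphisms $\phi_s\colon\pp\times\ll\to G/H$, $(x,u)\mapsto\exp(x)\cdot\exp\bigl(\sqrt{s\,(P^\ll(s))^{-1}}\,u\bigr)H$, where $P^\ll(s)$ is the $\ll$-block of $g(s)$ with respect to $Q$: the rescaling by $\sqrt{s\,(P^\ll(s))^{-1}}$ is built into the diffeomorphism so that the $\ll$-$\ll$ entries of $\phi_s^*\tilde g_s$ are \emph{exactly} $Q|_{\ll\times\ll}$ for every $s$, with no limiting argument needed on that block. For the mixed $\pp$-$\ll$ and the $\pp$-$\pp$ blocks, the paper uses the compactness of $\Ad(K)$ (giving a uniform operator-norm bound on $\Ad(\beta_s(u))$), together with Corollary~\ref{corollary:g_s goes to 0} (which controls $\Vert P^\ll(s)\Vert_Q/s\to0$) to kill the mixed and $\ll$-weighted contributions, and Corollary~\ref{corollary:p/p goes to 1} to send the $\pp$-$\pp$ block to $B|_{\pp\times\pp}$, uniformly on compacts. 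The limit is then recognized as the pullback under $\exp\colon\pp\to G/K$ of the symmetric metric. This is a purely $C^0$ (hence $C^\infty$, by homogeneity and explicitness of the formulas) metric computation — no Ricci asymptotics, no compactness extraction, no soliton rigidity. The lesson is that in the immortal/contractible case one can avoid the soliton machinery of Subsection~\ref{subsection:5.1} entirely by exploiting the exponential coordinates from $[\kk,\kk]\subset\hh$; your route would require genuinely new estimates on the decay of $p_i-p_j$ relative to $l_k$ that the paper never establishes.
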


\begin{proof}
	Let us fix the background metric $Q \coloneqq -B|_{\ll\times \ll} +B|_{\pp\times \pp}$ and let $g(s) = Q(P(s)\cdot,\cdot)$. Let us denote $\tilde{g}_s \coloneqq s^{-1}g(s)$. Since $[\kk,\kk]\subset \hh$ without loss of generality we can assume $\ll \subset \zz(\kk)$ (just take $\ll \perp_B \hh$). By \cite[Theorem 13.1.7]{hn} we have the following diffeomorphism
	
		\begin{align*}
		\phi \colon  \pp \times K/H  &\xrightarrow{\sim} G/H \\
		(x, kH) &\mapsto \exp(x)\cdot kH.
	\end{align*}
Moreover,  since $K/H=\mathbb{R}^{\dim \ll}$ is an abelian group, then $K/H = \exp(\ll)H$. 

Let us then define the following 1-parameter family of diffeomorphisms

	\begin{align*}
	\phi_s \colon  \pp \times \ll  &\xrightarrow{\sim} G/H \\
	(x, u) &\mapsto \alpha(x)\cdot \beta_s(u)H.
\end{align*}
with $\alpha(x)=\exp(x)$ and $\beta_s(u)=\exp(\sqrt{s(P{^\ll}(s))^{-1}}u)$, where $P^{\ll}(s)$ is the positive definite matrix defined by $\left. g(s)\right|_{\ll\times\ll}=Q(P^{\ll}(s)\cdot,\cdot)$.

Let $\frac{\partial \phi_s }{\partial E^\ll_i}(x,u) = \left. \frac{d}{dt}\alpha(x)\cdot\beta_s(u+tE^\ll_i)H\right|_{t=0}$ and notice that since $K/H$ is abelian, we get that

	\begin{align*}
	\left(L_{\left(\alpha(x)\cdot\beta_s(u)\right)^{-1}}\right)_*\frac{\partial \phi_s}{\partial E^\ll_i}(x,u) &= \left(L_{\beta_s(-u)}\right)_*\left(L_{\alpha(-x)}\right)_* \left(L_{\alpha(x)}\right)_*\left. \frac{d}{dt}\beta_s(u+tE^\ll_i)H \right|_{t=0}\\
	&=\left(L_{\beta_s(-u)}\right)_* \left. \frac{d}{dt}\beta_s(u+tE^\ll_i)H\right|_{t=0} \\
	&=\left. \frac{d}{dt}\beta_s(tE^\ll_i)H\right|_{t=0}.
\end{align*}
This implies the following

	\begin{align*}
	\phi_s^*\tilde{g}_s(E^\ll_i,E^\ll_j)_{(x,u)} &= \tilde{g}_s\left(\frac{\partial \phi_s^\beta }{\partial i}(x,u),\frac{\partial \phi_s^\beta }{\partial j}(x,u)\right)_{\phi_s(x,u)} \\
	&= \tilde{g}_s \left(\left. \frac{d}{dt}\beta_s(tE^\ll_i)\right|_{t=0},\left. \frac{d}{dt}\beta_s(tE^\ll_i)\right|_{t=0}\right)_{\phi_s(0,0)}\\
	&=s^{-1}Q \left(P^{\ll}(s) \cdot \sqrt{s\left(P^{\ll}(s)\right)^{-1}}E_i^\ll, \sqrt{s\left(P^{\ll}(s)\right)^{-1}}E_j^\ll\right) \\
	&= Q(E_i^\ll,E_j ^\ll).
\end{align*}

Let $\frac{\partial \phi_s}{\partial E^\pp_i}(x,u) = \left. \frac{d}{dt}\alpha(x+tE_i^\pp)\cdot\beta_s(u)\right|_{t=0}$ and let $\mathcal{A}_s^i(x,u) \coloneqq \left(L_{\left(\alpha(x)\cdot\beta_s(u)\right)^{-1}}\right)_*\frac{\partial \phi_s }{\partial E^\pp_i}(x,u)$. 

Observe that since $K$ belongs to the normalizer of $H$ in $G$, it acts on the right on $G/H$. Hence, using that $a^{-1}\exp(x) a = \exp\left(\Ad(a)x\right)$ \cite[Proposition 9.2.10]{hn}, we get that

\begin{align*}
	\mathcal{A}_s^i(x,u) &=  \left(L_{\beta_s(-u)}\right)_*\left(L_{\alpha(-x)}\right)_* \left(R_{\beta_s(u)}\right)_*\left. \frac{d}{dt}\alpha(x+tE_i^\pp )\right|_{t=0} \\
	&= \left(L_{\beta_s(-u)}\right)_*\left(R_{\beta_s(u)}\right)_*\left(L_{\alpha(-x)}\right)_*\left. \frac{d}{dt}\alpha(x+tE_i^\pp )\right|_{t=0} \\ 
	&=\Ad(\beta_s(u))\left(L_{\exp\left(-x\right)}\right)_*\left.\frac{d\exp }{dt}\left(x+tE^\pp_i\right)\right|_{t=0}
\end{align*}
and that $\mathcal{A}_s^i(x,0) = \mathcal{A}^i(x)$ does not depend on $s$.

Now observe that by \cite[Theorem 13.1.5]{hn} $\Ad(K)$ is compact. Hence, there is a constant $C$, such that 
\[\left\Vert \Ad(\beta_s(u)) \right\Vert_Q \leq  C,\]
 in the operator norm with respect to $Q$. By Corollary \ref{corollary:g_s goes to 0}, we have that
 
	\begin{align*}
	\left\vert \phi_s^*\tilde{g}_s(E^\pp_i,E^\ll_j)_{(x,u)} \right\vert&= \left\vert \tilde{g}_s\left(\frac{\partial \phi_s }{\partial E^\pp_i}(x,u),\frac{\partial \phi_s }{\partial E^\ll_j}(x,u)\right)\right\vert_{\phi_s(x,u)} \\
	&= \left\vert\tilde{g}_s \left(\mathcal{A}_s^i(x,u),\left. \frac{d}{dt}\beta_s(tE^\ll_j)\right|_{t=0}\right)\right\vert_{\phi_s(0,0)}\\
	&=s^{-1}\left\vert Q \left(\mathcal{A}_s^i(x,u), P^{\ll}(s) \cdot \sqrt{s\left(P^{\ll}(s)\right)^{-1}}E_j^\ll\right)\right\vert \\
	&= \sqrt{s^{-1}}\left\vert Q \left(\mathcal{A}_s^i(x,u),  \sqrt{P^{\ll}(s)}E_j^\ll\right)\right\vert \\
	&\leq C \sqrt{\frac{\left\Vert P^\ll(s)\right\Vert_{Q}}{s}}\cdot \left\Vert \mathcal{A}^i(x)\right\Vert_Q  \left \Vert E_j^\ll\right\Vert_Q \to 0,
\end{align*}
uniformly on compact sets of $\pp \times \ll$.

Finally, we have that

\begin{align*}
	\phi_s^*\tilde{g}_s(E_i^\pp,E_j^\pp)_{(x,u)} &= \tilde{g}_s\left(\frac{\partial \phi_s }{\partial E_i^\pp}(x,u),\frac{\partial \phi_s}{\partial E_j^\pp}(x,u)\right)_{\phi_s(x,u)} \\
	&= \tilde{g}_s \left(\mathcal{A}_s^i(x,u),\mathcal{A}_s^j(x,u)\right)_{\phi_s(0,0)} \\
	&=\left.\tilde{g}_s\right|_{\ll\times\ll} \left(\mathcal{A}_s^i(x,u),\mathcal{A}_s^j(x,u)\right)+\left.\tilde{g}_s\right|_{\pp\times\pp}  \left(\mathcal{A}_s^i(x,u),\mathcal{A}_s^j(x,u)\right).
\end{align*} 
Again by the fact that $\Ad(K)$ is compact and by Corollary \ref{corollary:g_s goes to 0}, we have that
\[\left\vert\left.\tilde{g}_s \right|_{\ll\times\ll} \left(\mathcal{A}_s^i(x,u),\mathcal{A}_s^j(x,u)\right)\right\vert\leq C^2\frac{\left\Vert P^\ll(s)\right\Vert_{Q}}{s}\cdot\left\Vert \mathcal{A}^i(x)\right\Vert_Q \left\Vert \mathcal{A}^j(x) \right\Vert _Q\to 0,\]
uniformly on compact subsets of $\pp\times \ll$.
Moreover, by Corollary \ref{corollary:p/p goes to 1} we know that $\tilde{g}_s|_{\pp\times\pp} \to B|_{\pp\times\pp}$ and since $B$ is $\Ad(K)$-invariant, we get that

\begin{align*}
	\left.\tilde{g}_s \right|_{\pp\times\pp}  \left(\mathcal{A}_s^i(x,u),\mathcal{A}_s^j(x,u)\right) \to \left.Q\right|_{\pp\times\pp}  \left(\mathcal{A}^i(x),\mathcal{A}^j(x)\right),
\end{align*}
uniformly on compact subsets of $\pp\times \ll$. Indeed, assume the contrary, then there exists $\epsilon >0$ and sequences $s_n \to \infty$ and $(x_n,u_n) \to (x_\infty,u_\infty)$ such that 

\begin{equation}\label{eq:convergence to symmetric}
	\left \vert\left.\tilde{g}_{s_n} \right|_{\pp\times\pp}  \left(\mathcal{A}_{s_n}^i(x_n,u_n),\mathcal{A}_{s_n}^j(x_n,u_n)\right) - \left.Q\right|_{\pp\times\pp}  \left(\mathcal{A}^i(x_n),\mathcal{A}^j(x_n)\right)\right\vert \geq \epsilon.
\end{equation}
By the compactness of $\Ad(K/H)$, we can extract a convergent subsequence, $\Ad(\beta_{s_n}(u_n)) \to \Ad\left(\exp(u'_\infty)\right)$, $u'_\infty \in \kk$. Therefore, taking the limit on \eqref{eq:convergence to symmetric} as $n \to \infty$, we get that

\begin{align*}
	\left \vert\left.Q \right|_{\pp\times\pp}  \left(\Ad\left(\exp(u'_\infty)\right)\cdot \mathcal{A}^i(x_\infty),\Ad\left(\exp(u'_\infty)\right)\cdot\mathcal{A}^j(x_\infty)\right) - \left.Q\right|_{\pp\times\pp}  \left(\mathcal{A}^i(x_\infty),\mathcal{A}^j(x_\infty)\right)\right\vert =0,
\end{align*}
since $Q$ is $\Ad(K)$-invariant. 

Observe that $\left.Q\right|_{\pp\times\pp}  \left(\mathcal{A}^i(x),\mathcal{A}^j(x)\right)$ is the pullback by the diffeomorphism
\[\exp \colon \pp \to G/K\]
 of the Ricci negative Einstein symmetric metric of $\Sigma_{\infty}\coloneqq \left(G/K,\left.B\right|_{\pp\times\pp}\right)$. 
  
Hence, proving that $\phi_s^*\tilde{g}_s$ converges in the $C^{\infty}$-topology to the Riemannian product $\Sigma_{\infty}\times\mathbb{E}^{\dim \ll}$.
\end{proof}

\begin{remark*}
	Since every immortal homogeneous Ricci flow is of Type III \cite[Theorem 4.1]{boe}, the proof above actually shows that the parabolic rescaled flow $\tilde{g}_s(t) \coloneqq s^{-1}g(st)$, $t \in (0,\infty)$, converges in Cheeger-Gromov sense to the expanding Ricci soliton given by the Riemannian product of the Ricci negative Einstein metric in $\Sigma_{\infty}$ and the flat Euclidean factor $\mathbb{E}^{\dim\ll}$. This generalizes the 3-dimensional case $\reallywidetilde{SL(2,\mathbb{R})} \to \mathbb{H}^2 \times \mathbb{R}$ (see \cite[Case 3.3.5]{lot}) to every semisimple homogeneous $\mathbb{R}$–bundle
	over irreducible Hermitian symmetric spaces $G/H$, since for those every $G$-invariant metric is awesome \cite[Remark 3.2]{bl22}.
\end{remark*}

\end{document}